\documentclass{article}
\usepackage{graphicx} % Required for inserting images

 \usepackage{amsthm,amsmath,amssymb, pgf, tikz, pgfpict2e, soul}
 \usepackage{tocloft}
 \usepackage{geometry}
\usetikzlibrary{decorations.pathmorphing}

\theoremstyle{plain}
\newtheorem{Thm}{Theorem}[section]
\newtheorem{Cor}[Thm]{Corollary}

\newtheorem{lemma}[Thm]{Lemma}
\newtheorem{Prop}[Thm]{Proposition}
\newtheorem{Def}[Thm]{Definition}

\newtheorem{example}[Thm]{Example}

\title{Class-uniformly resolvable designs with all but one block having size two}
\author{ Karen Cordova\\
\small Department of Mathematics and Statistics\\
\small Wellesley College\\
\small Wellesley MA 02481\\
\small\tt kc110@wellesley.edu\
\and
Alexander J. Diesl\\
\small Department of Mathematics and Statistics\\
\small Wellesley College\\
\small Wellesley MA 02481\\
\small\tt adiesl@wellesley.edu\
\and
Micaela Roth\\
\small Department of Mathematics and Statistics\\
\small Wellesley College\\
\small Wellesley MA 02481\\
\small\tt mr116@wellesley.edu\
\and
Ann N. Trenk\\
\small Department of Mathematics and Statistics\\
\small Wellesley College\\
\small Wellesley MA 02481\\
\small\tt atrenk@wellesley.edu\ }
 
\date{\today}

\begin{document}

\maketitle

\begin{abstract}

A Class-Uniformly  Resolvable Design (CURD) is a resolvable design in which each parallel class has the same block structure.  We study CURDS in which each parallel class contains one block of size $m$ and the remaining blocks have size $2$, for  $m \ge 3$.  In addition to establishing necessary conditions for such a CURD to exist, 
we present two general constructions.  The first  transforms a particular type of cyclic design with block size $k$ into a CURD  with partition $m^12^{\frac{n-m}{2}}$ where $m = 2k$.  This construction  is used to generate CURDS with 26 varieties  (where $m=6$) and  with 82 varieties (where $m=10$). The second constructs  a CURD with partition $m^12^{\frac{n-m}{2}}$ for every value of $m$  that is the power of an odd prime.
 \end{abstract}
 
 \medskip
 \noindent
 Keywords:  class-uniformly resolvable design, resolvable design,  block design, cyclic design
 
\section{Introduction}

\label{sec-intro}

 A teacher has a class of students and assigns the students to groups for a collaborative  activity.  The groups change each  week  but the group sizes do not change.  Is it possible to make group assignments so that after some number of weeks, each pair of students has been in a group together exactly once?

 This question can be stated using the terminology of  design theory where the elements to be partitioned are called \emph{varieties} and the sets of the partition are called \emph{blocks}.  In particular,  our question is modeled by a   Class-Uniformly Resolvable Design, originally defined in \cite{LaReVa91}.   Let $X$ be a set of varieties, $K$ a set of positive integers (the possible block sizes), and $\lambda$ a positive integer.  A \emph{block} is a subset of $X$, i.e., a set of varieties.  A \emph{pairwise balanced } $(n,K,\lambda)$-\emph{design} is a pair $(X,\cal{B})$ where $X$ consists of $n$ varieties   and $\cal B$ is a collection of blocks so that the size of each block is in $K$ and each pair of varieties in $X$ appears together in a block exactly $\lambda$ times.

 In our context, $X$ is the set of students in the class, $K$ is the set of possible group sizes, and $\cal B$ is the set of all groups taken over all weeks.   In keeping our student assignment problem in mind, we often refer to the elements of $X$ as \emph{students} (rather than varieties). There are additional aspects of our problem that are not captured by pairwise balanced designs.  The collection $\cal B$ of blocks must itself be partitioned into sets, where each set represents the group assignments for a particular week.  Furthermore, the group sizes must be the same each week.  Such a design is called a \emph{Class-Uniformly Resolvable Design} which we define formally below.

 \begin{table}
 \centering
 \begin{tabular} {|c |c | c | c | c|} \hline
  & Block $1$ &  Block $2$ &  Block $3$ &  Block $4$ \\ \hline
 Week 1 & $\{0,1,2\}$ & $\{3,6\}$ &  $\{4,7\}$ & $\{5,8\}$ \\ \hline
 Week 2 & $\{3,4,5\}$ & $\{0,6\}$ &  $\{1,7\}$ & $\{2,8\}$ \\ \hline
 Week 3 & $\{6,7,8\}$ & $\{0,3\}$ &  $\{1,4\}$ & $\{2,5\}$ \\ \hline
 Week 4 & $\{2,4,6\}$ & $\{0,8\}$ &  $\{1,5\}$ & $\{3,7\}$ \\ \hline
 Week 5 & $\{0,5,7\}$ & $\{1,6\}$ &  $\{2,3\}$ & $\{4,8\}$ \\ \hline
 Week 6 & $\{1,3,8\}$ & $\{0,4\}$ &  $\{2,7\}$ & $\{5,6\}$ \\ \hline
 \end{tabular}
 \caption{A $3^12^3$-CURD with $n=9$ and $\lambda = 1$ where each row lists the blocks in a given week.}
 \label{nine-table} 
 \end{table}
 
 \begin{Def}
 \rm{Let $(X,\cal B)$ be a pairwise balanced $(n,K,\lambda)$-design.  It is \emph{resolvable} if $\cal B$ can be partitioned into sets (called \emph{parallel classes}) so that each parallel class contains each member of $X$ exactly once.  Furthermore, it is a \emph{Class-Uniformly Resolvable Design} (CURD) if in addition, each parallel class has the same block sizes.  }
 \end{Def}

We illustrate these concepts with an example in which there are six parallel classes, one for each week.  Let $n=9$, $X = \{0,1,2, \ldots, 8\}$, $K = \{2,3\}$, and $\lambda = 1$.  Table~\ref{nine-table} shows the set $\cal B$ consisting of the $24$ blocks. They are listed in six rows, each representing a week of group assignments.  One can check that each element of $X$ appears once in each week, each pair of distinct elements from $X$ appears in exactly one block together, and in each week there is one block of size $3$ and three blocks of size $2$.

For distinct integers $k_1, k_2, \ldots, k_{\ell}$, we say that a CURD has partition $k_1^{p_1}k_2^{p_2} \cdots k_{\ell}^{p_{\ell}}$ if $K = \{k_1, k_2, \ldots, k_{\ell} \}$ and each parallel class has exactly $p_i$ blocks of size $k_i$ for $1 \le i \le \ell$.  
Just as we use the terms \emph{students} and \emph{varieties} interchangeably, we also refer to parallel classes as \emph{weeks}.
  Table~\ref{nine-table} provides an example of  a CURD with partition $3^1 2^3$, and Table~\ref{ten-table-2} provides an example of  a CURD with partition $4^1 2^3$.

\begin{example}
{\rm If $X = \{0,1,2,\ldots, n-1\}$ and $\cal B$ consists of the single block $\{0,1,2,\ldots, n-1\}$, then the pair $(X,\cal B)$ is a CURD we call the \emph{trivial CURD} on $n$ varieties.}
\label{trivial-example}
\end{example}

Much of the work constructing CURDS (e.g., \cite{DaSt01, DaSt04,LaReVa91, WeVa96}) focuses on the instances where $K \subseteq \{2,3\}$ and $\lambda = 1$, that is, blocks have size $2$ or $3$ and each pair of varieties appears together exactly once.    In this paper, we consider CURDS with the partition $m^12^{p_2}$, that is, each parallel class contains a block of size $m$, where $m \ge 3$, and the remaining blocks have size $2$.  Unless indicated otherwise, we 
restrict our attention to the case   $\lambda = 1$.   In our student assignment problem, the partition $m^12^{p_2}$ models the scenario in which each week the teacher works with one large group (containing $m$ students) and the remaining students work  in pairs. If the number of students in the class is $n$, then we know $p_2 = \frac{n-m}{2}$ and we can express our partition as having the form $m^1 2^{(\frac{n-m}{2})}$.  We refer to such a partition as a  $m^1 2^{(\frac{n-m}{2})}$-CURD
and use the terms $m$-block and $2$-block to refer to blocks of size $m$ and $2$ respectively.

 \section{Necessary Conditions and Consequences}
 \label{sec-necessary-conditions}

 In addition to our scenario of dividing  $n$ students into groups, a 
   CURD with $\lambda = 1$  can be represented by partitioning  the edges of the complete graph $K_n$ into color classes.  The vertices of $K_n$ are the varieties, so the edges are pairs of varieties and partitioning the edges ensures that each pair of varieties appears together exactly once.   The color classes correspond to the weeks.
   For the $2^33^1$-CURD in Table~\ref{nine-table},
 the first three weeks of blocks are illustrated in Figure~\ref{fig-nine-partition}
 and the blocks in each week consist of the edges in a  complete graph $K_3$ together  with the edges in three copies of $K_2$.  More generally, for a CURD with partition  $k_1^{p_1}k_2^{p_2} \cdots k_{\ell}^{p_{\ell}}$, each parallel class consists of the edges of $p_i$ copies of the complete graph with $k_i$ vertices for $1 \le i \le \ell$.  
 
 \begin{figure}
\newgeometry{left=0cm, right=0cm}
%\begin{tikzpicture}[scale=1.8]
\begin{tikzpicture}[scale=1.6]
\tikzstyle{vertex}=[circle,fill=black!25,minimum size=10pt,inner sep=3pt]
\begin{scope}[shift={(0,0)}]
\node[vertex,black][label=above:$0$] (0) at (0.0000, 1.0000){};
\node[vertex,black][label=above:$1$] (1) at (0.6428, 0.7660){};
\node[vertex,black][label=right:$2$] (2) at (0.9848, 0.1736){};
\node[vertex,black][label=right:$3$] (3) at (0.8660, -0.5000){};
\node[vertex,black][label=below:$4$] (4) at (0.3420, -0.9397){};
\node[vertex,black][label=below:$5$] (5) at (-0.3420, -0.9397){};
\node[vertex,black][label=left:$6$] (6) at (-0.8660, -0.5000){};
\node[vertex,black][label=left:$7$] (7) at (-0.9848, 0.1736){};
\node[vertex,black][label=above:$8$] (8) at (-0.6428, 0.7660){};

\draw[blue, thick] (0) -- (1) -- (2) -- (0) -- cycle;
\draw[red, thick] (3) -- (6) -- cycle;
\draw[red, thick] (4) -- (7) -- cycle;
\draw[red, thick] (5) -- (8) -- cycle;

\node at (0,-1.8) {Week 1};
\end{scope}

\begin{scope}[shift={(3.3,0)}]
\node[vertex,black][label=above:$0$] (0) at (0.0000, 1.0000){};
\node[vertex,black][label=above:$1$] (1) at (0.6428, 0.7660){};
\node[vertex,black][label=right:$2$] (2) at (0.9848, 0.1736){};
\node[vertex,black][label=right:$3$] (3) at (0.8660, -0.5000){};
\node[vertex,black][label=below:$4$] (4) at (0.3420, -0.9397){};
\node[vertex,black][label=below:$5$] (5) at (-0.3420, -0.9397){};
\node[vertex,black][label=left:$6$] (6) at (-0.8660, -0.5000){};
\node[vertex,black][label=left:$7$] (7) at (-0.9848, 0.1736){};
\node[vertex,black][label=above:$8$] (8) at (-0.6428, 0.7660){};

\draw[blue, thick] (3) -- (4) -- (5) -- (3) -- cycle;
\draw[red, thick] (0) -- (6) -- cycle;
\draw[red, thick] (1) -- (7) -- cycle;
\draw[red, thick] (2) -- (8) -- cycle;

\node at (0,-1.8) {Week 2};
\end{scope}

\begin{scope}[shift={(6.6,0)}]
\node[vertex,black][label=above:$0$] (0) at (0.0000, 1.0000){};
\node[vertex,black][label=above:$1$] (1) at (0.6428, 0.7660){};
\node[vertex,black][label=right:$2$] (2) at (0.9848, 0.1736){};
\node[vertex,black][label=right:$3$] (3) at (0.8660, -0.5000){};
\node[vertex,black][label=below:$4$] (4) at (0.3420, -0.9397){};
\node[vertex,black][label=below:$5$] (5) at (-0.3420, -0.9397){};
\node[vertex,black][label=left:$6$] (6) at (-0.8660, -0.5000){};
\node[vertex,black,black][label=left:$7$] (7) at (-0.9848, 0.1736){};
\node[vertex,black][label=above:$8$] (8) at (-0.6428, 0.7660){};

\draw[blue, thick] (6) -- (7) -- (8) -- (6) -- cycle;
\draw[red, thick] (0) -- (3) -- cycle;
\draw[red, thick] (1) -- (4) -- cycle;
\draw[red, thick] (2) -- (5) -- cycle;

\node at (0,-1.8) {Week 3};
\end{scope}

\end{tikzpicture}
\restoregeometry

\caption{Illustration of first three parallel classes for the CURD in Table~\ref{nine-table} }
\label{fig-nine-partition}
\end{figure}

 \subsection{Three Necessary Conditions}

We next present some necessary conditions for the existence of CURD with partition $m^1 2^{(\frac{n-m}{2})}$ and $\lambda = 1$, although several of the conditions can be  extended to any integer $\lambda \ge 1$.  More general necessary conditions for the existence of  CURDS can be found in \cite{DaSt01}.
 Our first condition determines the number of weeks  required in the $m^1 2^{(\frac{n-m}{2})}$-student assignment problem.  
 
 \begin{Thm}
If $(X,\cal B)$ is a CURD with partition $m^1 2^{(\frac{n-m}{2})}$ and $w$ is the number of parallel classes, then  $ w = \frac{n(n-1)}{m^2 -2m + n}$. 
\label{nec-cond-1}
 \end{Thm}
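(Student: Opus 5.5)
We prove this by double counting the pairs of varieties. Since $\lambda = 1$, every one of the $\binom{n}{2} = \frac{n(n-1)}{2}$ unordered pairs of distinct varieties lies in exactly one block of $\cal B$. Equivalently, in the graph picture described above, the edge sets of the complete subgraphs coming from the blocks partition the edges of $K_n$. So the total number of pairs covered, summed over all blocks, must equal $\frac{n(n-1)}{2}$.

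Next we count the pairs covered in a single parallel class. Each week consists of exactly one $m$-block and $\frac{n-m}{2}$ blocks of size $2$. The $m$-block contains $\binom{m}{2} = \frac{m(m-1)}{2}$ pairs, and each $2$-block contains exactly one pair. The total per week is therefore
\[
\frac{m(m-1)}{2} + \frac{n-m}{2} = \frac{m^2 - m + n - m}{2} = \frac{m^2 - 2m + n}{2}.
\]
Since the partition is the same in every parallel class, this number does not depend on the week.

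Distinct blocks share no pair, and the blocks of the $w$ parallel classes together make up all of $\cal B$. Summing over the weeks therefore gives
\[
w \cdot \frac{m^2 - 2m + n}{2} = \frac{n(n-1)}{2}.
\]
The quantity $m^2 - 2m + n = m(m-2) + n$ is positive, because $m \ge 3$ and $n \ge m$. Dividing by it yields $w = \frac{n(n-1)}{m^2 - 2m + n}$, as claimed.

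There is no real obstacle in this argument. The only points to check are that the uniform block structure makes the per-week count constant, and that $\lambda = 1$ makes the global count exactly $\binom{n}{2}$. For general $\lambda$ the same argument gives $w = \frac{\lambda n(n-1)}{m^2 - 2m + n}$.
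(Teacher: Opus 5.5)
Your proof is correct and follows essentially the same route as the paper. Both count the $\binom{n}{2}$ edges of $K_n$ and divide by the $\frac{m^2-2m+n}{2}$ pairs covered in each parallel class; your remarks that the denominator is positive and that the general-$\lambda$ version becomes $w=\frac{\lambda n(n-1)}{m^2-2m+n}$ are accurate extras.
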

 
 \begin{proof}
 There are a total of $\binom{n}{2}$  edges in the complete graph $K_n$, thus there are a total of $\frac{n(n-1)}{2}$ edges to partition.  In each parallel class there are $\binom{m}{2}$ edges in $K_m$ and one edge in each  of the $\frac{n-m}{2}$ copies of $K_2$ for a total of $\frac{m(m-1)}{2} + \frac{n-m}{2} = \frac{m^2 -2m + n}{2}$.  Thus $w = \frac{n(n-1)}{m^2 -2m + n}$. 
 \end{proof}
 
Our second necessary condition shows that the number of weeks in which a student is in an $m$-block is the same for each student.

  \begin{Thm}
If $m \ge 3$ and  $(X,\cal B)$ is a CURD with partition $m^1 2^{(\frac{n-m}{2})}$,   and $w$ is the number of parallel classes, then each student is in exactly $t$ blocks of size $m$ where  $t=\frac{n-w-1}{m-2}$.
\label{nec-cond-2}
 \end{Thm}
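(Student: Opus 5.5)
Fix a student $x \in X$ and let $t_x$ denote the number of parallel classes in which $x$ lies in the $m$-block. The plan is to count the $n-1$ other students that $x$ meets, organized by week, and show that $t_x$ is forced by $n$, $m$, and $w$ alone. Since $w$ is the same for every student (it is the total number of parallel classes, given by Theorem~\ref{nec-cond-1}), this makes $t_x$ independent of $x$.

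The counting step goes as follows. Because the design is resolvable, $x$ lies in exactly one block in each of the $w$ weeks. In the $t_x$ weeks where $x$ is in the $m$-block, $x$ is paired with $m-1$ other students. In the remaining $w - t_x$ weeks, $x$ is in a $2$-block and is paired with exactly one other student. Since $\lambda = 1$, every other student appears with $x$ in exactly one block over all weeks. Therefore
\[
t_x(m-1) + (w - t_x) = n-1 .
\]
In graph language, we are partitioning the $n-1$ edges of $K_n$ at vertex $x$ among the color classes.

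Solving gives $t_x(m-2) = n-1-w$. Because $m \ge 3$, we have $m-2 \neq 0$, so we may divide to obtain $t_x = \frac{n-w-1}{m-2}$. This value does not depend on $x$, so every student lies in exactly $t = \frac{n-w-1}{m-2}$ blocks of size $m$.

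There is no real obstacle here; the argument is a single double count. The only points needing care are using $\lambda = 1$ to guarantee that each of the $n-1$ other students is met exactly once, and noting that the hypothesis $m \ge 3$ is exactly what permits the division. (For $m = 2$ every block has size two, and $t$ would not be determined.)
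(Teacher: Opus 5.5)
Your proposal is correct and is essentially the paper's proof: you fix a student, split the $n-1$ pairs containing that student by week to get $t(m-1) + (w-t) = n-1$, and solve for $t$, which is independent of the student. Your explicit remarks that $\lambda = 1$ guarantees each other student is met exactly once and that $m \ge 3$ permits division by $m-2$ are points the paper leaves implicit.
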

 
 \begin{proof}
 Let $t_i$ be the number of $m$-blocks in our CURD  that contain student $i$.  Thus student $i$ is in an $m$-block  for $t_i$ weeks and a $2$-block   for the remaining $w-t_i$ weeks.  There are $n-1$ edges incident to the vertex represented by student $i$ in the complete graph $K_n$.  Each week in which student $i$ is in an $m$-block  accounts for $m-1$ of these edges, and each week in which they are in a $2$-block  accounts for one of these edges.  Thus $n-1 = t_i(m-1) + (w-t_i)$ and solving for $t_i$ we obtain $t_i = \frac{n-1-w}{m-2}$.  We know that $w$ is determined by $n$ and $m$ by Theorem~\ref{nec-cond-1}, so the value of $t_i$ depends only on $n$ and $m$ and is therefore independent of $i$.
 \end{proof}

 \begin{Thm}
Let $(X,\cal B)$ be a CURD with partition $m^1 2^{(\frac{n-m}{2})}$,    where $w$ is the number of parallel classes, $m \ge 3$,  and each student is in exactly $t$   blocks of size $m$.  Then $mw=tn$.
\label{nec-cond-3}
 \end{Thm}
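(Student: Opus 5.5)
We will prove $mw=tn$ by double counting the set of incidences
\[
I=\{(x,B): x\in X,\ B\in\mathcal B,\ |B|=m,\ x\in B\},
\]
that is, the pairs consisting of a student and an $m$-block containing that student.

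First we count $I$ by blocks. Each parallel class has exactly one block of size $m$, and there are $w$ parallel classes. Different parallel classes give different blocks: with $\lambda=1$ and $m\ge 3$, a repeated $m$-block would cover each of its pairs twice. So there are exactly $w$ blocks of size $m$, and each contains $m$ students. Hence $|I|=mw$.

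Next we count $I$ by students. By hypothesis, and as guaranteed by Theorem~\ref{nec-cond-2}, each of the $n$ students lies in exactly $t$ blocks of size $m$. Hence $|I|=tn$.

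Equating the two counts gives $mw=tn$. The only point that needs care is that the $w$ blocks of size $m$ are counted once each. We can avoid the distinctness argument altogether by indexing incidences by pairs (student, week) instead of (student, block). Then the count by students is still $t$ per student, because $t$ is the number of weeks in which that student is in the $m$-block, as in the proof of Theorem~\ref{nec-cond-2}. There is no real obstacle beyond this bookkeeping.
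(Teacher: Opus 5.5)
Your proof is correct and is essentially the paper's argument. The paper double counts the entries of the $w\times n$ incidence matrix recording whether student $j$ is in the $m$-block in week $i$, which is exactly your (student, week) indexing, with row sums $m$ and column sums $t$. Your remark that the $w$ blocks of size $m$ are distinct (because $\lambda=1$) is valid but unnecessary once you index by weeks.
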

 
 \begin{proof}
 Let $A$ be the $w \times n$ incidence matrix for which $A_{ij} = 1$ if student $j$ is in an $m$-block during week $i$, and $A_{ij} = 0$  otherwise.  We calculate the sum of all entries in $A$ in two ways and set them equal.  In week $i$ there will be $m$ students in the $m$-block so the sum of the entries in row $i$ is $m$ for each $i$.  There are $w$ rows in matrix $A$, so the sum of the entries is $mw$.  By Theorem~\ref{nec-cond-2}, each student $j$ is an an $m$-block exactly $t$ times, so the sum of the entries in column $j$ is $t$.  There are $n$ students, so $n$ columns in matrix $A$, so the sum of the entries is $tn$.  Hence $mw = tn$ as desired.
 \end{proof}

 \subsection{Consequences of our necessary conditions}
 
 Our first consequence is that the trivial CURD is the only   $m^1 2^{(\frac{n-m}{2})}$ CURD with $t=1$. \\

 \begin{Thm} 
For $m \ge 3$, there is a unique $m^1 2^{(\frac{n-m}{2})}$-CURD (up to isomorphism) in which each student is in exactly one $m$-block.   Moreover, it is
the trivial CURD from Example~\ref{trivial-example}  and it has $n=m$.
 \label{trivial-unique-thm}
 \end{Thm}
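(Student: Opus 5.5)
The plan is to feed $t=1$ into the three necessary conditions and solve for $n$. Suppose $(X,\cal B)$ is an $m^1 2^{(\frac{n-m}{2})}$-CURD with $m\ge 3$ in which every student lies in exactly $t=1$ block of size $m$, and let $w$ be the number of parallel classes. By Theorem~\ref{nec-cond-3}, $mw = tn = n$, so $w = n/m$. By Theorem~\ref{nec-cond-2}, $t = \frac{n-w-1}{m-2}$, so $m-2 = n - w - 1$, that is, $w = n-m+1$. We will solve these two expressions for $w$ simultaneously, and check the result against Theorem~\ref{nec-cond-1}.

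From $n/m = n-m+1$ we get $n = mn - m^2 + m$, so $n(m-1) = m(m-1)$. Since $m\ge 3$, we have $m-1\neq 0$, and hence $n=m$ and $w=1$. As a consistency check, Theorem~\ref{nec-cond-1} gives $w = \frac{m(m-1)}{m^2-2m+m} = 1$, which agrees. This algebra is routine; the only point needing care is that $m\ge 3$ lets us divide by $m-1$ (and also by $m-2$ in Theorem~\ref{nec-cond-2}).

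Next I will identify the design. With $n=m$ there are $\frac{n-m}{2}=0$ blocks of size two, and there is exactly $w=1$ parallel class. That class consists of a single $m$-block which must contain every student, so it is the whole set $X$. Thus $\cal B=\{X\}$, which is the trivial CURD of Example~\ref{trivial-example}. Uniqueness up to isomorphism is then immediate: any two such designs have $|X|=m$ and a single block equal to the whole variety set, so any bijection between the variety sets is an isomorphism.

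Finally, existence holds because the trivial CURD on $m$ varieties is indeed an $m^12^0$-CURD. In it every pair appears exactly once, there is one parallel class, and each student is in exactly one $m$-block. I do not expect a real obstacle. The only subtle step is making sure the $t=1$ hypothesis is applied through Theorems~\ref{nec-cond-2} and~\ref{nec-cond-3} and not assumed about $w$ directly.
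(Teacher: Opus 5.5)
Your proposal is correct and follows the same route as the paper. Set $t=1$, use Theorem~\ref{nec-cond-3} to get $w=n/m$ and Theorem~\ref{nec-cond-2} to get $n-w-1=m-2$, solve to obtain $n=m$ and $w=1$, and identify the single parallel class as the trivial CURD; your check against Theorem~\ref{nec-cond-1} and your explicit remarks on existence and isomorphism are minor additions the paper leaves implicit.
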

 
 \begin{proof}
 Let $(X,\cal B)$ be an $m^1 2^{(\frac{n-m}{2})}$-CURD  and let $w$ be the number of parallel classes in $(X,\cal B)$. 
Since each  student is in exactly one $m$-block, we know $t=1$.  
   By Theorem~\ref{nec-cond-3} we know that $mw = n$ so $w = \frac{n}{m}$.  Now we apply Theorem~\ref{nec-cond-2} with $t=1$ to obtain $\frac{n-w-1}{m-2} = 1$ or equivalently, $n-w-1 = m-2$.  Substituting $w = \frac{n}{m}$ yields $n - \frac{n}{m} = m-1$ which simplifies to $\frac{n(m-1)}{m} = m-1$ and thus $n=m$.  Hence $w=1$ and there is only one parallel class in $(X,\cal B)$, which means there is one block in $\cal B$ and it has size $n$, so we get the trivial CURD on $n$ varieties.
 \end{proof}
 
 Theorem~\ref{trivial-unique-thm} handles the case in which $t=1$, so we next focus on cases in which $t \ge 2$.

  \begin{Thm}
  For $m \ge 3$ and $t \ge 2$, 
let $(X,\cal B)$ be a CURD with partition $m^1 2^{(\frac{n-m}{2})}$    and  for which each student is in exactly $t$  blocks of size $m$.  Then the following  hold:  
\begin{enumerate}
\item[{\rm (i)}]  $n = \frac{m(mt -2t + 1)}{m-t}$ 
\item[{\rm (ii)}] $m \le t(t^2-2t+2)$
\item[{\rm (iii)}]  $m \ge t+1$
\item[{\rm (iv)}]   $n \le t^4 - t^3 + t^2 - t + 1$

\end{enumerate}

\label{m-bound-thm}
 \end{Thm}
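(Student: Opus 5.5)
We will derive all four parts from the relations already established, namely Theorem~\ref{nec-cond-2}, which gives $n-1 = t(m-1) + (w-t)$, that is $w = n-1-t(m-2)$, and Theorem~\ref{nec-cond-3}, which gives $mw = tn$. The plan is to get (i) by elimination and then turn the integrality of $n$ into the bounds (ii)--(iv).

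For (i), substitute $w = tn/m$ into the relation from Theorem~\ref{nec-cond-2}. Multiplying by $m$ gives $tn = mn - m - tm(m-2)$, so
\[ n(m-t) = m\bigl(1 + t(m-2)\bigr) = m(mt-2t+1). \]
Before dividing we need $m \ne t$. If $m=t$, the left side vanishes while the right side is positive, so this cannot happen. Moreover, $n>0$ forces $m>t$. Hence $m \ge t+1$, which is (iii), and dividing gives (i).

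For (ii), set $d = m-t \ge 1$, so $m = t+d$. Rewrite the numerator as $mt-2t+1 = t(m-t) + t^2 - 2t + 1 = td + (t-1)^2$. Then
\[ n = \frac{(t+d)\bigl(td + (t-1)^2\bigr)}{d} = (t+d)t + \frac{(t+d)(t-1)^2}{d}. \]
Since $n$ is an integer, $d$ divides $(t+d)(t-1)^2$, hence $d$ divides $t(t-1)^2$. It follows that $d \le t(t-1)^2$, and therefore
\[ m = t + d \le t + t(t-1)^2 = t(t^2-2t+2), \]
which is (ii). This divisibility step is the only real idea in the argument; the rest is algebra.

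For (iv), we maximize the expression for $n$ over $d$ using the same decomposition:
\[ n = t^2 + td + t(t-1)^2 + \frac{t^2(t-1)^2}{d}. \]
As a function of $d$, this is convex on $1 \le d \le t(t-1)^2$, so its maximum is attained at an endpoint. We therefore compare the two endpoints.
\begin{itemize}
\item At $d=1$: $n = t^2 + t + t(t-1)^2 + t^2(t-1)^2$.
\item At $d = t(t-1)^2$: $n = t^2 + t^2(t-1)^2 + t(t-1)^2 + t$.
\end{itemize}
The two values are equal. Expanding gives $t^4 - 2t^3 + t^2 + t^3 - 2t^2 + t + t^2 + t = t^4 - t^3 + 2t$.

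This does not match the stated bound $t^4 - t^3 + t^2 - t + 1$, so we recheck the computation at the endpoint $d = t(t-1)^2$. There $m = t(t^2-2t+2)$, and
\[ n = \frac{m\bigl(t(m-t) + (t-1)^2\bigr)}{m-t} = m\Bigl(t + \frac{1}{t}\Bigr) = (t^2-2t+2)(t^2+1) = t^4 - 2t^3 + 3t^2 - 2t + 2. \]
The earlier expansion was therefore in error, and the correct plan is to evaluate $n = m\bigl(t + (t-1)^2/d\bigr)$ directly at each endpoint.
\begin{itemize}
\item At $d=1$, $m = t+1$: $n = (t+1)\bigl(t + (t-1)^2\bigr) = (t+1)(t^2-t+1) = t^3+1$.
\item At $d = t(t-1)^2$: $n = t^4 - 2t^3 + 3t^2 - 2t + 2$.
\end{itemize}
We will show $n \le t^4 - t^3 + t^2 - t + 1$ at both endpoints. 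At $d=1$ the difference is $t^4 - 2t^3 + t^2 - t = t(t^3 - 2t^2 + t - 1)$, which is nonnegative for $t \ge 2$. At $d = t(t-1)^2$ the difference is $t^3 - 2t^2 + t - 1$, which is also nonnegative for $t \ge 2$. By convexity, $n$ at any admissible $d$ lies below the larger endpoint value, which gives (iv).

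The main obstacle is this convexity and endpoint bookkeeping. It must be carried out with the correct expression $n = (t+d)\bigl(t + (t-1)^2/d\bigr)$, expanded carefully, and a possible loss of sharpness in the bound is acceptable.
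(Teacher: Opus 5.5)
Your argument is correct once the false start in (iv) is removed, and it differs from the paper's route in two places.

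For (i) and (iii), the paper combines Theorems~\ref{nec-cond-1} and~\ref{nec-cond-3} into $t(m^2-2m+n)=mn-m$ and divides by $m-t$ at once. Only afterwards does it prove $m\ge t+1$ by contradiction, showing that $m\le t$ would give $m(m-2)\le -1$. You instead use the degree relation of Theorem~\ref{nec-cond-2} to reach the same identity $n(m-t)=m(mt-2t+1)$. You then read off $m>t$ from the positivity of $n$ and of the right-hand side before dividing. This also closes the small gap in the paper of dividing by $m-t$ before knowing it is nonzero.

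Part (ii) is the same divisibility argument as the paper's.

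For (iv), the paper bounds the terms of $n=mt+(t-1)^2+\frac{t(t-1)^2}{m-t}$ separately, using $m\le t(t^2-2t+2)$ in the first term and $m-t\ge 1$ in the last. That takes two lines but is lossy, since the two extremes cannot occur at the same time. Your convexity argument gives the exact maximum under (ii) and (iii), namely $(t^2+1)(t^2-2t+2)$ at $d=t(t-1)^2$. This beats the stated bound by $t(t-1)^2-1$: for $t=2$ it gives $n\le 10$, which is attained by the $4^12^3$-CURD, versus the paper's $11$. The cost is a somewhat longer computation.

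In a final write-up, delete the first attempt entirely. Its error is not in the expansion, as you say, but in the decomposition itself: $\frac{(t+d)(t-1)^2}{d}=(t-1)^2+\frac{t(t-1)^2}{d}$, not $t(t-1)^2+\frac{t^2(t-1)^2}{d}$. The claim that the two endpoint values coincide is therefore also false. State convexity for the correct function $n(d)=t^2+(t-1)^2+td+\frac{t(t-1)^2}{d}$, which is linear plus a positive multiple of $1/d$. Also justify the endpoint comparisons explicitly: the differences are $t\bigl(t(t-1)^2-1\bigr)$ and $t(t-1)^2-1$, both positive for $t\ge 2$.
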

 
 \begin{proof}
 Let $w$ be the number of parallel classes, so by Theorem~\ref{nec-cond-3} we know $t = \frac{mw}{n}$.  Combining this with Theorem~\ref{nec-cond-1}  we get  $t = \frac{m(n-1)}{m^2-2m+n}$ or equivalently $t(m^2 -2m + n)= mn-m$.  
 Solving for $n$ yields $n = \frac{m^2t -2mt + m}{m-t} =  \frac{m(mt -2t + 1)}{m-t}$, proving (i). 
 
 Now using long division we obtain $n = mt + (t-1)^2 + \frac{t(t-1)^2}{m-t}$.  Since $n, m$, and $t$ are integers we conclude that $t(t-1)^2$ is divisible by $m-t$.  We  are given $t \ge 2$ so $t(t-1)^2 > 0$ and thus $m-t \le t(t-1)^2$ which simplifies to $m \le t(t^2-2t + 2)$.   This proves (ii).

 To prove (iii) we start with $t(m^2 -2m + n)= mn-m$ from above.  For a contradiction, assume $m \le t$ so 
 $m(m^2 -2m + n) \le  mn-m$.  Dividing by $m$ and then subtracting $n$ from both sides yields  $m(m-2)  \le  -1$.  This is a contradiction since $m \ge 3$.
 
 Finally, to prove (iv) we start with the expression  $n = mt + (t-1)^2 + \frac{t(t-1)^2}{m-t} $  from above   and substitute $m \le t(t^2-2t + 2)$ from (ii)  to get 
  $n  \le  t^2(t^2-2t + 2) + (t-1)^2 + \frac{t(t-1)^2}{m-t}$.  Now using $m -t \ge 1$ from (iii)  yields
  $ n \le   t^2(t^2-2t + 2) + (t-1)^2 + t(t-1)^2 =   t^4 - t^3 + t^2 - t + 1.$
 \end{proof}
 
 \subsection{Limitations on $n$ for the existence of a 
  CURD with partition $m^1 2^{(\frac{n-m}{2})}$}

For a fixed integer $m$, our necessary conditions and their consequences limit the possible values of $n$ for which a  $m^1 2^{(\frac{n-m}{2})}$ CURD exists.  We first show that there are at most $m-1$ values of $n$ that yield a solution to the $m^1 2^{(\frac{n-m}{2})}$-student assignment problem.  

\begin{Thm}
    If $m$ is an integer with $m \ge 3$ then there are at most $m-1$ values of $n$ for which there exists a CURD with partition  $m^1 2^{(\frac{n-m}{2})}$.
    \label{max-number-thm}
\end{Thm}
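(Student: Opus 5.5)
Fix $m \ge 3$ and suppose a CURD with partition $m^1 2^{(\frac{n-m}{2})}$ exists on $n$ varieties. The plan is to show that $n$ is determined by the value of $t$, and that only $m-1$ values of $t$ are possible.

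By Theorem~\ref{nec-cond-2}, every student lies in the same number $t$ of $m$-blocks, and $t \ge 1$. If $t = 1$, Theorem~\ref{trivial-unique-thm} gives $n = m$. If $t \ge 2$, Theorem~\ref{m-bound-thm}(i) gives $n = \frac{m(mt-2t+1)}{m-t}$, and Theorem~\ref{m-bound-thm}(iii) gives $t \le m-1$. The formula in (i) also returns $n = m$ when $t = 1$, since
\[
\frac{m(m-1)}{m-1} = m .
\]
So in every case $n = f(t)$, where $f(t) = \frac{m(mt-2t+1)}{m-t}$ and $t \in \{1, 2, \ldots, m-1\}$.

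The map $t \mapsto n$ is a function from a set of size $m-1$ onto the set of admissible values of $n$. Hence there are at most $m-1$ such values. Note that every $t \ge 1$ is covered: $t = 1$ by Theorem~\ref{trivial-unique-thm} and $t \ge 2$ by Theorem~\ref{m-bound-thm}(iii), so no separate bound on $t$ is needed. Injectivity of $f$ is not required for the upper bound, though it does hold: $f$ is increasing on $[1, m-1]$, because the numerator increases in $t$ (as $m > 2$) and the positive denominator decreases.

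The only delicate point is making sure the ranges line up: the $t=1$ case must be taken from Theorem~\ref{trivial-unique-thm}, since Theorem~\ref{m-bound-thm} assumes $t \ge 2$, and $t \le m-1$ must come from part (iii).
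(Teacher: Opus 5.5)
Your proof is correct and follows the paper's argument: every student lies in the same number $t$ of $m$-blocks, Theorem~\ref{m-bound-thm}(iii) gives $t \le m-1$, and Theorem~\ref{m-bound-thm}(i) shows that $t$ determines $n$. Your separate treatment of $t=1$ via Theorem~\ref{trivial-unique-thm} adds a little care the paper omits, since Theorem~\ref{m-bound-thm} is stated only for $t \ge 2$ and the paper applies it without comment.
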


\begin{proof}

    Suppose there exists a  CURD with partition $m^1 2^{(\frac{n-m}{2})}$.  By Theorem~\ref{nec-cond-2}, each student is in exactly $t$ blocks of size $m$ and by Theorem~\ref{m-bound-thm}(iii) we know $t \le m-1$.  
For any $m\ge 3$, fixing a value of $t$ determines the value of $n$  by   Theorem~\ref{m-bound-thm} (i).   Thus there are at most $m-1$ possible values for $n$.
\end{proof}

In general,  some of the  values of $t$ between $1$ and $m-1$  may fail to satisfy conditions (ii) and (iv) of Theorem~\ref{m-bound-thm}.
 However, for $m=3$ there are precisely two values of $n$ for which a CURD with partition $m^1 2^{(\frac{n-m}{2})}$ exists.

 \begin{Thm}
 There exists a CURD with partition $3^1 2^{(\frac{n-3}{2})}$ if and only if $n=3$ or $n=9$.
 \label{m-equals-3-thm}
 \end{Thm}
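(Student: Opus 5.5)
We prove both directions, using the necessary conditions for the ``only if'' part and explicit designs for the ``if'' part.

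For necessity, suppose a CURD with partition $3^1 2^{(\frac{n-3}{2})}$ exists, and let $t$ be the number of $3$-blocks containing each student, as given by Theorem~\ref{nec-cond-2}. By Theorem~\ref{m-bound-thm}(iii), if $t \ge 2$ then $m \ge t+1$, so $t \le m-1 = 2$. Hence $t \in \{1,2\}$, which is the bound used in the proof of Theorem~\ref{max-number-thm}.
\begin{itemize}
\item If $t=1$, Theorem~\ref{trivial-unique-thm} gives $n=m=3$.
\item If $t=2$, Theorem~\ref{m-bound-thm}(i) with $m=3$ gives $n = \frac{3(3\cdot 2 - 4 + 1)}{3-2} = 9$.
\end{itemize}
As a sanity check, the case $t=2$ also satisfies (ii), since $3 \le 2(4-4+2)=4$, and (iv), since $9 \le 16-8+4-2+1 = 11$. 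So no other values of $n$ are possible.

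For sufficiency, $n=3$ is realized by the trivial CURD of Example~\ref{trivial-example}: a single $3$-block and zero $2$-blocks, which fits the partition $3^1 2^0$. For $n=9$, we exhibit the design in Table~\ref{nine-table} and verify the following.
\begin{itemize}
\item Each of the six rows is a partition of $\{0,\dots,8\}$ into one $3$-block and three $2$-blocks.
\item Every one of the $\binom{9}{2}=36$ pairs is covered exactly once.
\end{itemize}
Counting first is convenient: each week covers $3+3=6$ pairs, so six weeks cover exactly $36$ pairs. It therefore suffices to check that no pair is repeated, or equivalently that every pair appears.

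A structured way to check this is to view the varieties as $\mathbb{Z}_3\times\mathbb{Z}_3$, writing $3a+b$ for $(a,b)$.
\begin{itemize}
\item The first three weeks use the three rows $\{3a,3a+1,3a+2\}$ as $3$-blocks, together with the ``same column, different row'' pairs not in the remaining row. Across these three weeks, this covers all within-row pairs and all within-column pairs.
\item The last three weeks use transversal triples as $3$-blocks: $\{2,4,6\}$, $\{0,5,7\}$ and $\{1,3,8\}$, which are the lines of one parallel direction of $AG(2,3)$. Their $2$-blocks cover the remaining pairs, which lie on the fourth direction.
\end{itemize}

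The only real work is this verification, and the main risk is an error in it. I would settle it by directly listing, for each variety, its eight partners across the six weeks and confirming that they are distinct.
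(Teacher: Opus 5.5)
Your proof is correct and follows essentially the paper's route. For necessity, the paper cites Theorem~\ref{max-number-thm} (at most $m-1=2$ admissible values of $n$), and since the trivial CURD and Table~\ref{nine-table} realize two values, there are no others; you instead unpack that theorem into the explicit cases $t=1$ and $t=2$, as the paper itself does for $m=4,5$, which gives $n\in\{3,9\}$ directly without relying on the existence results. Your $AG(2,3)$ check of Table~\ref{nine-table} is also correct and supplies the verification the paper leaves to the reader.
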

 
 \begin{proof}
 When $n=3$ we have the trivial CURD  with partition $3^1$ given in Theorem~\ref{trivial-unique-thm}.  When $n=9$, we have the CURD with partition $3^1 2^3$ given in  Table~\ref{nine-table}. 
By Theorem~\ref{max-number-thm}, there are no additional values of $n$ for which a  CURD  with partition $3^1 2^{(\frac{n-3}{2})}$ exists.
  \end{proof}

The next two theorems consider the cases $m=4$ and $m=5$.

 \begin{Thm}
 If $(X,\cal B)$ is  a CURD with partition $4^1 2^{(\frac{n-4}{2})}$ then $n=4, \,10$ or $28$.
 \label{m-equals-4-thm}
 \end{Thm}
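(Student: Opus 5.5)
We would argue exactly as in the proof of Theorem~\ref{m-equals-3-thm}: enumerate the finitely many admissible values of $t$ and compute the corresponding $n$ from Theorem~\ref{m-bound-thm}(i). By Theorem~\ref{nec-cond-2}, each student lies in exactly $t$ blocks of size $4$. If $t=1$, Theorem~\ref{trivial-unique-thm} forces $n=m=4$, which gives the trivial CURD. Otherwise $t\ge 2$, and Theorem~\ref{m-bound-thm}(iii) gives $t\le m-1=3$, so $t\in\{2,3\}$.

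For each of these two values we substitute $m=4$ into $n=\frac{m(mt-2t+1)}{m-t}$. When $t=2$ this gives $n=\frac{4(8-4+1)}{2}=10$. When $t=3$ it gives $n=\frac{4(12-6+1)}{1}=28$. Condition (ii) of Theorem~\ref{m-bound-thm} holds in both cases: $4\le 2\cdot 2=4$ and $4\le 3\cdot 5$. Since we only need a list of candidate values, this check is not strictly required, but it confirms that neither value is excluded. We may also note that $n-m$ is even in each case, as the partition requires.

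Putting the cases together, $n\in\{4,10,28\}$. We do not expect any real obstacle. The only point needing care is that Theorem~\ref{m-bound-thm} assumes $t\ge 2$, so the case $t=1$ must be handled separately by Theorem~\ref{trivial-unique-thm}.
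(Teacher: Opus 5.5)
Your proof is correct and follows the paper's argument: you use Theorem~\ref{m-bound-thm}(iii) to get $t\le 3$, then read off $n=4,10,28$ from the formula in Theorem~\ref{m-bound-thm}(i). You handle $t=1$ separately via Theorem~\ref{trivial-unique-thm}, which is slightly more careful than the paper: the paper plugs $t=1$ directly into (i) even though Theorem~\ref{m-bound-thm} is stated for $t\ge 2$ (harmless, since the derivations of (i) and (iii) never use $t\ge 2$).
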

 
 \begin{proof}
Suppose $(X,\cal B)$ is a CURD with partition $4^1 2^{(\frac{n-4}{2})}$.  By Theorem~\ref{nec-cond-2},
  each variety is in exactly $t$ blocks of size $4$.   
 
  By Theorem~\ref{m-bound-thm}(iii) we know $1 \le t \le 3$.   Now we apply Theorem~\ref{m-bound-thm}(i):  when    $t=1$  we get $n=4$, when $t=2$ we get $n=10$, and when $t=3$ we get $n=28$.
  \end{proof}

The trivial CURD has $n=4$, and 
Table~\ref{ten-table-2}  shows a  $4^1 2^3$-CURD which has $n=10$.  We do not know if a $4^1 2^{12}$-CURD exists.

 \begin{Thm}
 If $(X,\cal B)$ is  a CURD with partition $5^1 2^{(\frac{n-5}{2})}$ then $n=5, \, 25$ or $65$.
 \label{m-equals-5-thm}
 \end{Thm}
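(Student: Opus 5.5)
We follow the same pattern as the proof of Theorem~\ref{m-equals-4-thm}. Suppose $(X,\cal B)$ is a CURD with partition $5^1 2^{(\frac{n-5}{2})}$. By Theorem~\ref{nec-cond-2}, each student lies in exactly $t$ blocks of size $5$, for some positive integer $t$. By Theorem~\ref{m-bound-thm}(iii) we have $t \le m-1 = 4$, so $t \in \{1,2,3,4\}$.

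We then run through the four cases.

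\emph{The case $t=1$.} Theorem~\ref{trivial-unique-thm} gives $n=m=5$.

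\emph{The cases $t \ge 2$.} Here we substitute into the formula $n = \frac{m(mt-2t+1)}{m-t}$ of Theorem~\ref{m-bound-thm}(i) with $m=5$, which gives $n = \frac{5(3t+1)}{5-t}$.
\begin{itemize}
\item For $t=2$ this is $\frac{5\cdot 7}{3} = \frac{35}{3}$. This is not an integer, so $t=2$ is impossible. Equivalently, the divisibility condition from the proof of Theorem~\ref{m-bound-thm}(ii), that $m-t$ divides $t(t-1)^2$, fails, since $3 \nmid 2$.
\item For $t=3$ we get $n = \frac{5\cdot 10}{2} = 25$.
\item For $t=4$ we get $n = \frac{5 \cdot 13}{1} = 65$.
\end{itemize}

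Combining the cases, $n \in \{5, 25, 65\}$.

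There is no real obstacle here. The only point needing care is discarding $t=2$ via non-integrality, which is where this case differs from $m=4$: there every $t$ gave an integer. As an optional consistency check, one could verify that $w = \frac{n(n-1)}{m^2-2m+n}$ from Theorem~\ref{nec-cond-1} is an integer for $n=25$ and $n=65$. This gives $w=15$ and $w=52$ respectively, and $mw=tn$ holds in both cases. This check is not needed for the statement, which asserts only a necessary condition.
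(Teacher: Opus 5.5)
Your proposal is correct and matches the paper's proof: bound $t\le 4$, dispose of $t=1$ via Theorem~\ref{trivial-unique-thm}, exclude $t=2$, and read off $n=25$ and $n=65$ from Theorem~\ref{m-bound-thm}(i). The only difference is that you exclude $t=2$ because $n=35/3$ is not an integer (equivalently $3\nmid 2$), while the paper uses the inequality $m\le t(t^2-2t+2)=4$ from Theorem~\ref{m-bound-thm}(ii); that inequality is itself derived from the same divisibility condition, so the two arguments are the same.
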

 
 \begin{proof}
Suppose $(X,\cal B)$ is a CURD with partition $5^1 2^{(\frac{n-4}{2})}$, thus $m=5$.  By Theorem~\ref{nec-cond-2},
  each variety is in exactly $t$ blocks of size $5$.     By Theorem~\ref{m-bound-thm}(iii) we know $  t \le 4$.  The case $t=1$ is handled in Theorem~\ref{trivial-unique-thm}; in this case, $(X,\cal B)$ is the trivial CURD with $n=m$, so $n=5$.    When $t=2$ we use 
  Theorem~\ref{m-bound-thm}(ii) to conclude $m \le 4$, a contradiction, so $t=3$ or $t=4$.  
  Now applying Theorem~\ref{m-bound-thm}(i),  when    $t=3$  we get $n=25$, and  when $t=4$ we get $n=65$.
    \end{proof}

We will see in Theorem~\ref{primecurd} that a 
 $5^12^{10}$-CURD exists.  We do not know if a $5^12^{30}$-CURD exists.  In the remaining sections we provide  additional constructions and construction techniques.

 \section{Constructing a CURD from a cyclic design}

Table~\ref{ten-table-2} provides an example of a $4^1 2^3$-CURD.  In this section, we will explore a way to generalize this example, developing a method that starts with a cyclic  balanced incomplete block design that is generated by two base blocks, and  in certain circumstances transforms it into  a CURD with partition $m^1 2^{(\frac{n-m}{2})}$. 

We begin by recalling some fundamental definitions.
For additional background on combinatorial designs, see \cite{CoDi07}.

A \emph{design}  is a pair $(X, \mathcal B) $ where $X$ is a finite set (of varieties) and $\mathcal B$ is a multiset consisting of non-empty subsets of $X$ (the blocks).  A design 
 $(X, \mathcal B) $  in which $|X| = v$ and $|\mathcal B| = b$ is a  $(b,v,r,k,\lambda)$-\emph{balanced incomplete block design} (BIBD) if there exist integers $r,k,\lambda$ so that each variety is in exactly $r$ blocks ($r$-regular), each block contains exactly $k$ varieties ($k$-uniform), each pair of varieties appears together in exactly $\lambda$ blocks ($\lambda$-balanced), and $k < v$ (incomplete).  
If $v$ is a positive integer, we define the \emph{ multiset of modulo $v$ differences }  of $\{x_1, x_2, \ldots, x_k\}$ to be the   multiset  $\{ \pm(x_i - x_j) \pmod v : 1 < i < j < k \}$, where each element is expressed as a non-negative residue modulo $v$. For example, the  multiset of modulo $13$ differences of $\{2,5,6\}$ is $\{1,3,4,9,10,12\}$ and the  multiset of modulo $13$ differences of $\{1,2,3\}$ is $\{1,1,2,11, 12,12\}$.

A block design with variety set $\{0,1,2,\ldots, v-1\}$ is \emph{cyclic} if it can be generated from a set of base blocks by successively adding $j$ (in modulo $v$) to each element of the base blocks.   A classic result in design theory 
tells us that a set $\cal B$ of base blocks generates a cyclic $\lambda$-balanced design if and only if each  element in $\{1,2,3, \ldots, v-1\}$ occurs exactly $\lambda$ times  in the union of the multisets of modulo $v$ differences taken over all base blocks in $\mathcal B$. 
We illustrate this in the following example.

\begin{example}
{\rm
Let $B = \{1,2\}$ and $Y = \{0,3\}$.  The multiset of modulo $5$ differences of $Y$ is $\{2,3\}$ and   that of $B$ is $\{1,4\}$.  Since each element of $\{1,2,3,4\}$ appears exactly once in the union of these difference multisets, the set  $\{B,Y\}$ generates a cyclic block design with $\lambda = 1$.
}
\label{cyclic-ex1}
\end{example}

\begin{figure}
\centering
 \begin{tikzpicture}[scale=0.65]

\draw(6,7) circle [radius = 3.5];
\draw (4,8) circle [radius=3pt];
 \draw (6,5) circle [radius=3pt];
  \draw (6,9) circle [radius=3pt];
    \draw (8,6) circle [radius=3pt];  
    \draw (8,8) circle [radius=3pt];
    \draw(16,7) circle [radius = 3.5];
    \draw (18,6) circle [radius=3pt];
 \draw (16,5) circle [radius=3pt];
  \draw (14,9) circle [radius=3pt];
    \draw (14,6) circle [radius=3pt];  
    \draw (17,8) circle [radius=3pt];

\tikzstyle{vertex}=[circle,fill=black!25,minimum size=10pt,inner sep=3pt]

\node[vertex, black][label=above:$1$] (1) at (8,8){};
\node[vertex, black][label=above:$0'$] (0') at (17,8){};
\node[vertex, black][label=below:$3'$] (3') at (14,6){};
\node[vertex, black][label=below:$2$] (2) at (8,6){};

\draw[black, thick]  (1) -- (0') -- (3') -- (2) -- (1);
\draw[black, thick]  (1) -- (3') ;
\draw[black, thick]  (0')  -- (2);

\node[vertex, red][label=above:$0$] (0) at (6,9) {}; 
\node[vertex, red][label=below:$3$] (3) at (6,5) {}; 
\draw[red,thick,decorate, decoration={snake, amplitude=.5mm, segment length=5mm}] (0) -- (3);

\node[vertex, red][label=right:$1'$] (1') at (18,6){};
\node[vertex, red][label=below:$2'$] (2') at (16,5){};
\draw[red,thick,decorate, decoration={snake, amplitude=.5mm, segment length=5mm}] (1') -- (2');

\node[vertex, blue][label=below:$4$] (4) at (4,8){};
\node[vertex, blue][label=right:$4'$] (4') at (14,9){};
        
 \draw[blue,thick,dashed] (4) .. controls (5,13) and (11,12) .. (4');
\end{tikzpicture}

\caption{Illustration of  the week 1   parallel class for the CURD in Table~\ref{ten-table-2} }
\label{fig-base-blocks-10}
 \end{figure}

In Table~\ref{ten-table-2}, we use the cyclic block design  from Example~\ref{cyclic-ex1} to construct a $4^12^3$-CURD.  The construction and verification is provided in Theorem~\ref{curd-from-design-thm} and Proposition~\ref{ten-verif-prop}.
The CURD shown in Table~\ref{ten-table-2}  comes from utilizing the cyclic block design of Example~\ref{cyclic-ex1} on two different sets of  varieties, ${S} = \{0,1,2,3,4\}$ and ${S'} = \{0',1',2',3',4'\}$.     The blocks in week $1$ of the table are the base blocks for the $4^12^3$-CURD, and they are  also shown in Figure~\ref{fig-base-blocks-10}.   Subsequent rows of the table are constructed  by adding $1$ to each entry as described formally below.
The base block of size 4 in Table~\ref{fig-base-blocks-26} is obtained by combining the elements of ${S} $ in $B$ with the elements of ${S}' $ that naturally correspond to elements of $Y$.  The remaining base blocks come in three categories:  those with both elements in ${S} $, those with both elements in ${S}' $,  and those with one element in each of these sets.

   \begin{table}
 \centering
 \begin{tabular} {|c |c | c | c | c|} \hline
  & Block $1$ &  Block $2$ &  Block $3$ &  Block $4$ \\ \hline
 Week 1 & $\{1,2,0',3'\} $ & $\{0,3\}$ &  $\{1',2'\}$ & $\{4,4'\}$ \\ \hline
 Week 2 &  $\{2,3,1',4'\} $ & $\{1,4\}$ &  $\{2',3'\}$ & $\{0,0'\}$ \\ \hline
 Week 3 & $\{3,4,2',0'\} $ & $\{2,0\}$ &  $\{3',4'\}$ & $\{1,1'\}$ \\ \hline
 Week 4 & $\{4,0,3',1'\} $ & $\{3,1\}$ &  $\{4',0'\}$ & $\{2,2'\}$ \\ \hline
 Week 5 & $\{0,1,4',2'\} $ & $\{4,2\}$ &  $\{0',1'\}$ & $\{3,3'\}$ \\ \hline
 \end{tabular}
 \caption{A  $4^12^3$-CURD with $n=10$ and $\lambda = 1$ where each row lists the blocks in a given week.}
 \label{ten-table-2} 
 \end{table}
 
  \begin{figure}[hbt!]
\centering

\begin{tikzpicture}

\path[use as bounding box] (-3.3,-4.5) rectangle (10.3,3.3);

\tikzstyle{vertex}=[circle,fill=black!25,minimum size=10pt,inner sep=3pt]

\node[vertex,blue][label=above:$0$] (0) at (0.0000,2.3000){};
\node[vertex,black][label=above right:$1$] (1) at (1.0688,2.0367){};
\node[vertex,red][label=right:$2$] (2) at (1.8929,1.3066){};
\node[vertex,black][label=right:$3$] (3) at (2.2832,0.2771){};
\node[vertex,red][label=right:$4$] (4) at (2.1505,-0.8156){};
\node[vertex,red][label=below right:$5$] (5) at (1.5251,-1.7216){};
\node[vertex,red][label=below:$6$] (6) at (0.5504,-2.2331){};
\node[vertex,blue][label=below:$7$] (7) at (-0.5504,-2.2331){};
\node[vertex,blue][label=below left:$8$] (8) at (-1.5251,-1.7216){};
\node[vertex,black][label=left:$9$] (9) at (-2.1505,-0.8156){};
\node[vertex,red][label=left:$10$] (10) at (-2.2832,0.2771){};
\node[vertex,blue][label=left:$11$] (11) at (-1.8929,1.3066){};
\node[vertex,red][label=above left:$12$] (12) at (-1.0688,2.0367){};

\draw[red,thick,decorate,decoration={snake,amplitude=1.15mm,segment length=11.5mm}] (2)--(12)--cycle;
\draw[thick] (1)--(3)--(9)--(1)--cycle;
\draw[red,thick,decorate,decoration={snake,amplitude=1.15mm,segment length=11.5mm}] (6)--(10)--cycle;
\draw[red,thick,decorate,decoration={snake,amplitude=1.15mm,segment length=11.5mm}] (4)--(5)--cycle;

\node[vertex,blue][label=above:$0'$] (0') at (6.9000,2.3000){};
\node[vertex,blue][label=above right:$1'$] (1') at (7.9688,2.0367){};
\node[vertex,black][label=right:$2'$] (2') at (8.7929,1.3066){};
\node[vertex,blue][label=right:$3'$] (3') at (9.1832,0.2771){};
\node[vertex,red][label=right:$4'$] (4') at (9.0505,-0.8156){};
\node[vertex,black][label=below right:$5'$] (5') at (8.4251,-1.7216){};
\node[vertex,black][label=below:$6'$] (6') at (7.4504,-2.2331){};
\node[vertex,red][label=below:$7'$] (7') at (6.3496,-2.2331){};
\node[vertex,red][label=below left:$8'$] (8') at (5.3759,-1.7216){};
\node[vertex,blue][label=left:$9'$] (9') at (4.7495,-0.8156){};
\node[vertex,red][label=left:$10'$] (10') at (4.6168,0.2771){};
\node[vertex,red][label=left:$11'$] (11') at (5.0071,1.3066){};
\node[vertex,red][label=above left:$12'$] (12') at (5.8312,2.0367){};

\draw[thick] (2')--(5')--(6')--(2')--cycle;
\draw[red,thick,decorate,decoration={snake,amplitude=1.15mm,segment length=11.5mm}] (11')--(4')--cycle;
\draw[red,thick,decorate,decoration={snake,amplitude=1.15mm,segment length=11.5mm}] (12')--(7')--cycle;
\draw[red,thick,decorate,decoration={snake,amplitude=1.15mm,segment length=11.5mm}] (8')--(10')--cycle;

% bendy stuff
\draw[blue,thick,dashed] (0)..controls (3.45,3.45)..(0');
\draw[blue,thick,dashed] (11)..controls (1.725,-1.84)..(9');
\draw[blue,thick,dashed] (7)..controls (2.3,-4.14) and (4.14,-2.3)..(1');
\draw[blue,thick,dashed] (8)..controls (1.15,-6.9) and (12.65,-3.22)..(3');

\draw (0,0) circle (3.22);
\draw (6.9,0) circle (3.22);

\end{tikzpicture}
\caption{Illustration of first   parallel class for the  $6^1 2^{10}$-CURD in Proposition~\ref{prop-26}.  The edges between vertices in $\{1,3,9\}$ and $\{2',5',6'\}$ are not shown.}
\label{fig-base-blocks-26}
\end{figure}

More generally, we start with a cyclic $(b,v,r,k, \lambda)$-balanced incomplete block design  that is generated by two base blocks and under   conditions given in Theorem~\ref{curd-from-design-thm}, we construct a  CURD with partition $m^1 2^{\frac{n-m}{2}}$ where $m = 2k$ and $n = 2v$.  We begin by duplicating the set of varieties $ V$ to sets ${\cal S}$ and ${\cal S'}$ where  ${\cal S} =  \{0, 1, 2, \ldots, v-1\}$ and ${\cal S}' = \{0', 1', 2', \ldots, (v-1)'\}$. 
 There is a natural correspondence between ${\cal S}$ and ${\cal S}'$, namely $\sigma:{\cal S} \to {\cal S'}$ defined by $\sigma(i) = i'$.
For $i,k \in {\cal S}$, define the operation $\oplus$   by $i \oplus k = i+k {\pmod v}$, and $\sigma(i) \oplus k = \sigma(i\oplus k)$, so for example, when $v=5$ we have $4' \oplus 2  = \sigma(4) \oplus 2 = \sigma(4 \oplus 2) = \sigma(1) = 1'$.

We are  now ready to state the main theorem of this section.

\begin{Thm}
Let $(V, \cal B)$ be a cyclic 
$(b,v,r,k, \lambda)$-BIBD with $\lambda = 1$ that is 
generated by base blocks $B$ and $Y$ where $B = \{b_1, b_2, \ldots, b_k\}$, \  $Y = \{y_1, y_2, \ldots, y_k\}$, and ${V} = \{0,1,2,\ldots, v-1\}$. Let $D_1$ be the multiset of modulo $v$ differences of $B$ and let $D_2$ be the multiset of  modulo $v$ differences  of $Y$.   Further, let ${ S} = \{0,1,2, \ldots, v-1\}$ and  ${ S}' = \{0', 1', 2', \ldots, (v-1)'\}$ and define $\sigma: S\to S'$ by $\sigma(i) = i'$.  Suppose that we can partition ${V} - B$ as $A \cup C$ and ${V} - Y$ as $X \cup Z$ so that the following  three conditions hold:
\begin{enumerate}
\item There exists a partition of  the elements of $A$ into $\binom{k}{2}$ unordered pairs $\{a_1,\hat{a}_1\},  \{a_2,\hat{a}_2\},   \ldots,  \{a_{\binom{k}{2}},
\hat{a}_{\binom{k}{2}} \}$ so that $\{\pm(\hat{a}_i - a_i) \pmod v\} = D_2$.

\item There exists a partition of  the elements of $X$ into $\binom{k}{2}$ unordered pairs  $\{x_1,\hat{x}_1\},  \{x_2,\hat{x}_2\},   \ldots,  \{x_{\binom{k}{2}},
\hat{x}_{\binom{k}{2}} \}$ so that $\{\pm(\hat{x}_i - x_i) \pmod v\}= D_1$.

\item There exists a bijection $\phi:C \to Z$ where $C = \{c_1, c_2, \ldots, c_{\ell} \}$, $Z = \{z_1, z_2, \ldots, z_{\ell} \}$, and $\phi(c_i) = z_i$ so that the sets $\{(z_i-c_i) \pmod v: 1 \le i \le \ell\}$ and $\{(y_j-b_i) \pmod v: 1 \le i,j \le k\}$  partition $\{0,1,2, \ldots, v-1\}$.
\end{enumerate}

Then the following  set of base blocks generates a  CURD with partition $m^1 \, 2^{\frac{n-m}{2}}$ under the addition $\oplus$  modulo $v$ where the set of varieties is $S \cup S'$ and $m = 2k$.

\begin{itemize}
\item Size $m$ base block: $\{b_1, b_2, \ldots, b_k\} \cup  \{\sigma(y_1), \sigma(y_2),\ldots, \sigma(y_k)\}$,

\item Size $2$ base blocks within $S$:  $\{ \{a_i,\hat{a}_i\} :  1 \le i \le \binom{k}{2} \}$

\item Size $2$ base blocks within $S'$:  $\{ \{\sigma(x_i), \sigma(\hat{x}_i)\}:  1 \le i \le \binom{k}{2} \}$

\item Size $2$ base blocks between $S$ and $S'$:  $ \{ c_i, \sigma(z_i)\} : 1     \le i \le \ell \}$.

\end{itemize}

\label{curd-from-design-thm}
\end{Thm}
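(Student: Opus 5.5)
The proof has two parts. First we show that each translate of the base blocks is a parallel class of the right shape. Then we show that, over the $v$ translates (weeks $j=0,1,\ldots,v-1$, obtained by applying $\oplus j$), every pair of varieties in $S\cup S'$ lies in exactly one block.

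\textbf{Each week is a parallel class with partition $m^1 2^{\frac{n-m}{2}}$.} Since translation by $j$ is a bijection on $S$ and on $S'$, it suffices to check week $0$, the base blocks themselves. On the $S$ side, the base blocks meet $S$ in the following sets:
\begin{itemize}
\item $B$, from the $m$-block;
\item the pairs partitioning $A$;
\item the elements $c_i$, which run over $C$.
\end{itemize}
Since $V-B=A\cup C$, these sets partition $S$. On the $S'$ side, the blocks meet $S'$ in $\sigma(Y)$, in the $\sigma$-images of the pairs partitioning $X$, and in $\sigma(Z)$, where $\phi$ is a bijection onto $Z$. Since $V-Y=X\cup Z$, these sets partition $S'$. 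So every variety appears exactly once. The $m$-block has $2k=m$ elements, and all other base blocks have size $2$. Hence $n=2v$, and the count of $2$-blocks is automatically $\frac{n-m}{2}$.

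\textbf{Coverage of pairs.} We use the standard difference argument, now applied to the three types of pairs separately. A pair $\{s,t\}$ with $s,t\in S$ is covered exactly once over all translates if and only if each nonzero residue $d$ arises exactly once among the within-$S$ differences of the base blocks, counted with both signs. By the same reasoning as the classic result quoted before Example~\ref{cyclic-ex1}, the pairs $\{s, s+d\}$ with fixed $d$ form orbits under translation, and each occurrence of $\pm d$ covers one orbit.

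For pairs within $S$, the within-$S$ differences come from the $m$-block, contributing $D_1$, and from the pairs of $A$, contributing $D_2$ by condition~1. Because $\{B,Y\}$ generates a cyclic BIBD with $\lambda=1$, the multiset $D_1\cup D_2$ contains each element of $\{1,\ldots,v-1\}$ exactly once. Pairs within $S'$ are handled symmetrically: the $m$-block contributes $D_2$, via $\sigma(Y)$, and condition~2 contributes $D_1$.

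For mixed pairs $\{s,\sigma(t)\}$, the relevant invariant is the directed difference $t-s \pmod v$, which takes all $v$ values, including $0$. For fixed $e$, the $v$ pairs $\{s,\sigma(s+e)\}$ form a single translation orbit. So we need each $e\in\{0,\ldots,v-1\}$ to occur exactly once among the mixed differences of the base blocks. The $m$-block supplies the $k^2$ differences $y_j-b_i$, and the mixed $2$-blocks supply $z_i-c_i$. Condition~3 says exactly that these sets partition $\mathbb{Z}_v$. Implicitly this requires the values $y_j-b_i$ to be distinct, which is part of what ``partition'' asserts.

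\textbf{Verifying the orbit bookkeeping and concluding.} The main point requiring care is this bookkeeping. We must check that a difference occurring once in a base block covers each pair of that orbit exactly once, and that no pair is covered by two weeks. Within $S$, the orbits of $\pm d$ and $\mp d$ coincide, which is why both signs are listed. For mixed pairs there is no sign symmetry, so directed differences are the right count. We also check the sizes: $|D_1|+|D_2|=2k(k-1)=v-1$, and $k^2+\ell=v$ since $\ell=v-k-2\binom{k}{2}=v-k^2$. These confirm there is no double counting. With every pair covered exactly once and every week of the same shape, the translates form a $m^1 2^{\frac{n-m}{2}}$-CURD with $\lambda=1$.
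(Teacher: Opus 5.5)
Your proof is correct. The parallel-class check (verify week $0$, then translate) and the use of differences to locate each within-$S$, within-$S'$ and mixed pair are the same as in the paper. The two proofs differ in how they get ``exactly once.''

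The paper uses the differences only to show that every pair is covered \emph{at least} once. It then closes with a global count: each week contains $4k^2-4k+1$ pairs, and $v(4k^2-4k+1)=\binom{2v}{2}$. You instead read off exact multiplicity class by class, because each relevant difference occurs exactly once. You also use $k^2+\ell=v$ to force the $k^2+\ell$ values in condition~3 to be distinct. The paper's total is just the sum of your three local counts, $\binom{v}{2}+\binom{v}{2}+v^2$.

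The paper's route needs no orbit bookkeeping, and it does not depend on whether ``partition'' in condition~3 is read with multiplicity. Yours is more local and shows exactly which hypothesis governs which class of pairs. The cost is the orbit check you flag but do not carry out, which is worth making explicit. A base pair $\{x,x+d\}$ in $S$ (or in $S'$) covers each pair of its orbit exactly once over the $v$ translates only because no nonzero translation fixes $\{s,s+d\}$. That holds because $v=2k(k-1)+1$ is odd. For mixed pairs the orbit always has size $v$, since translations never swap $S$ and $S'$.
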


\begin{proof}
 Let $\cal D$ be the design on the variety set  ${\cal S} \cup {\cal S}'$  generated by these four types of base blocks.
We first show that each element of  ${\cal S} \cup {\cal S}'$ appears in exactly one  base block of $\cal D$.
First consider element $j \in {\cal S}$.    Since ${V} $ is partitioned as $A \cup B \cup C$, we know that $j$ is in exactly one of the sets $A,B,C$.    If $j \in A$ then $j$ is in a size $2$ base block within ${\cal S}$, if $j \in B$ then $j$ is in the size $m$ base block, and if  $j \in C$ then $j$ is in a size $2$ base block between  ${\cal S}$ and ${\cal S}'$.  A similar result holds for $\sigma(j) \in {\cal S}'$.    Blocks in subsequent weeks are constructed using the function that sends  $j \to  j \oplus 1$,  and 
$\sigma(j)  \to  \sigma(j \oplus 1)$,
which is one-to-one, so in each subsequent week there are no repeated vertices, and therefore each appears exactly once in each week.

We next check that every pair of elements appears together in a block.  Since $\{B,Y\}$ generate a cyclic BIBD with $\lambda = 1$ we know that the multisets $D_1$ and $D_2$ are actually sets, and that $D_1 \cup D_2 $ forms a partition of $ \{1,2,3, \ldots, v-1\}$.  First consider two elements $u,w \in {\cal S}$.  If the mod $v$ residue of $u-w$ is in $D_1$ then $u$ and $w$ appear together in a block generated by the size $m$ base block, and if it is in $D_2$ then they appear together in a block generated by one of the size $2$ blocks within ${\cal S}$.    Analogously, any two distinct elements of ${\cal S}'$ appear together in a block.

Now consider $u \in {\cal S}$ and 
$\sigma(w) \in {\cal S}'$  for some $w \in {\cal S}$, 
and the modulo $v$ residue of $w-u$.  If this residue appears in the set $\{(y_j-b_i) \pmod v: 1 \le i,j \le k\}$ then $u$ and 
 $\sigma(w)$  appear together in a block generated by the size $m$ base block.  Otherwise,  by condition (3),  this residue appears in  $\{(z_i-c_i) \pmod v: 1 \le i \le \ell\}$ and $u$ and $\sigma(w)$ 
appear together in a block generated by one of the size $2$ base blocks between ${\cal S}$ and ${\cal S}'$.

Finally, we show that no two elements appear together more than once by a counting argument.  First we write $v$ and $\ell$ in terms of $k$.  Since $|D_1| = |D_2| = 2 \binom{k}{2}$ and $D_1 \cup D_2$ is a partition of $\{1,2,3,  \ldots, v-1\}$, we know $v = 4 \binom{k}{2} + 1  = 2k^2 -2k + 1$.    In addition, set $\cal S$ is partitioned as $A \cup B \cup C$ where $|{\cal S}| = v$,  and $|A| =  2 \binom{k}{2}$, and $|B| = k$.  Thus $|C| = \ell  = v- 2\binom{k}{2} - k = k^2 - 2k + 1$.  

Consider the graph $G$ whose vertex set is ${\cal S} \cup {\cal S}'$ and where two vertices are adjacent if they are together in one of the base blocks.   
We compute $|E(G)|$ by considering edges that arise from the four different categories of base blocks.  
 The graph induced in $G$ by the vertices in the size $m$ base block has $\binom{m}{2}$ edges and $\binom{m}{2} = \binom{2k}{2} = 2k^2 - k$.   There are $\binom{k}{2}$ edges resulting from the size $2$ base blocks within $\cal S$ and an additional $\binom{k}{2} $ edges from the size $2$ base blocks within ${\cal S}'$ for a total of $k^2-k$.  The remaining $\ell$ edges come from the size $2$ base blocks with one vertex in $\cal S$ and the other in ${\cal S}'$.    Hence $|E(G)| = (2k^2 - k) + (k^2-k) + (k^2-2k+1) =4k^2 -4k + 1$.   Thus in week 1 (and each subsequent week) there are $4k^2-4k+1$ pairs of elements of ${\cal S} \cup {\cal S}'$  that appear together in a block.  
 
 Since there are $v$ weeks,  at most $v(4k^2-4k+1)$  distinct pairs of elements of ${\cal S} \cup {\cal S}'$  appear together in a block of $\cal D$. The set ${\cal S} \cup {\cal S}'$ contains  $\binom{2v}{2}$  pairs of distinct elements, and we proved above that each pair appears together in a block of $\cal D$.    However,    $\binom{2v}{2} = v(4k^2 - 4k + 1)$,  so each pair appears together \emph{exactly once} in a block of $\cal D$ and our design is pairwise balanced with $\lambda = 1$ as desired. 
\end{proof}

 There are multiple challenges in satisfying the hypothesis of Theorem~\ref{curd-from-design-thm}:  (i) finding the initial base blocks $B$ and $Y$, (ii) choosing the partition of ${V} - B$  as $A \cup  C$ and the partition of ${V} - Y$  as $X \cup  Z$, (iii) partitioning $A$ and $X$ into pairs and finding a bijection from $C$ to  $Z$ so that the three conditions of Theorem~\ref{curd-from-design-thm} are satisfied.   In Proposition~\ref{ten-verif-prop}, we meet all the conditions for $m=4$ and $n= 10$, and  likewise in Proposition~\ref{prop-26}  we meet all the conditions for 
 $m= 6$, $n = 26$ and  in Proposition~\ref{prop-82}, we meet all the conditions for 
 $m= 10$, $n = 82$.

 \begin{Prop}
 The blocks given in Table~\ref{ten-table-2} form a CURD with partition $4^12^3$.
 \label{ten-verif-prop}
 \end{Prop}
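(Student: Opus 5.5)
The plan is to apply Theorem~\ref{curd-from-design-thm} to the cyclic design of Example~\ref{cyclic-ex1}. There $v=5$, $k=2$, $B=\{1,2\}$ and $Y=\{0,3\}$. Example~\ref{cyclic-ex1} already shows that $\{B,Y\}$ generates a cyclic BIBD with $\lambda=1$, with $D_1=\{1,4\}$ and $D_2=\{2,3\}$. It then remains to read off from week 1 of Table~\ref{ten-table-2} the sets $A,C,X,Z$, the pairings, and the bijection $\phi$, and to check the three conditions of the theorem.

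Reading off week 1, I would take $A=\{0,3\}$ and $C=\{4\}$, so that $V-B=\{0,3,4\}=A\cup C$. Likewise I would take $X=\{1,2\}$ and $Z=\{4\}$, so that $V-Y=\{1,2,4\}=X\cup Z$. Here $\binom{k}{2}=1$, so each of $A$ and $X$ is a single pair, and $\ell=1$.

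The three conditions are then checked as follows.
\begin{enumerate}
\item The pair $\{0,3\}$ has differences $\pm 3 \equiv \{2,3\} \pmod 5$, which equals $D_2$.
\item The pair $\{1,2\}$ has differences $\{1,4\}$, which equals $D_1$.
\item Let $\phi(4)=4$, so $z_1-c_1=0$. The cross differences $y_j-b_i$ are $0-1,\ 0-2,\ 3-1,\ 3-2$, which are $4,3,2,1 \pmod 5$. Together with $\{0\}$ these partition $\{0,1,2,3,4\}$.
\end{enumerate}

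The base blocks produced by Theorem~\ref{curd-from-design-thm} are then $\{1,2,0',3'\}$, $\{0,3\}$, $\{1',2'\}$ and $\{4,4'\}$, which is exactly week 1 of the table. Week $j+1$ is obtained by applying $\oplus j$, and one confirms that rows 2 through 5 of Table~\ref{ten-table-2} are these translates. For example, row 2 is $\{2,3,1',4'\}$, $\{1,4\}$, $\{2',3'\}$, $\{0,0'\}$, the translate by $1$ (mod $5$ arithmetic gives $\{3+1,0+1\}=\{4,1\}$ and $4\oplus 1=0$).

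By Theorem~\ref{curd-from-design-thm}, the blocks therefore form a CURD with partition $4^12^3$ on $n=2v=10$ varieties. As a sanity check, $w=5$ agrees with Theorem~\ref{nec-cond-1}, since $90/(16-8+10)=5$.

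The only real work is the bookkeeping: matching each table row to the correct translate and verifying the difference computations in condition 3. No step is a genuine obstacle.
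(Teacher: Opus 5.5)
Your proposal is correct and matches the paper's proof: it applies Theorem~\ref{curd-from-design-thm} to $B=\{1,2\}$, $Y=\{0,3\}$ modulo $5$ with the same choices $A=\{0,3\}$, $C=\{4\}$, $X=\{1,2\}$, $Z=\{4\}$ and $\phi(4)=4$, verifies the three conditions, and identifies the rows of Table~\ref{ten-table-2} as the translates of the week-1 base blocks. Your explicit row-by-row translate check and the count check via Theorem~\ref{nec-cond-1} are minor additions that the paper does not spell out.
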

 
 \begin{proof}
 For the variety set ${V}  = \{0,1,2,3,4\}$, 
 the base blocks $B = \{1,2\}$ and $Y = \{0,3\}$ modulo $5$  generate a  cyclic $(10,5,4,2,1)$-BIBD.    The modulo $5$ difference sets are $D_1 = \{1,4\}$ and $D_2 = \{2,3\}$.   We partition ${V} - B$ as $A \cup C$ where $A = \{0,3\}$ and $C = \{4\}$, and we
 partition ${V} - Y$ as $X \cup Z$ where $X = \{1,2\}$ and $Z = \{4\}$.  
 
 Since $k=2$ we know $\binom{k}{2} = 1$ and our partition of $A$ is $\{0,3\}$.  The set $\{\pm(3-0) \pmod 5 \}$ is $\{2,3\}$ which equals $D_2$, so condition (1)  of Theorem~\ref{curd-from-design-thm}
 is satisfied.   Similarly, our partition of $X$ is $\{1,2\}$.  The set $\{\pm(2-1) \pmod 5 \}$ is $\{1,4\}$ which equals $D_1$, so condition (2) is satisfied.  
 
 Since $C = \{4\}$ we know $\ell = 1$ and we define $\phi:C \to Z$ by $\phi(4) = 4$.  Thus
 $\{(\phi(c_i) - c_i) \pmod 5: 1 \le i \le \ell\} = \{0\}$.  The original base blocks are $B = \{1,2\}$ and $Y = \{0,3\}$, so $b_1 = 1$, $b_2 = 2$, 
 $y_1 = 0$,  and $y_2 = 3$.  The set of differences $\{(y_j - b_i) \pmod 5: 1 \le i,j \le 2\}$ is $\{1,2,3,4\}$, thus condition (3) is satisfied.
 
 By Theorem~\ref{curd-from-design-thm}, the following base blocks generate at $4^1 \, 2^3$-CURD.
 \begin{itemize}
 \item Size $4$ base block:  $\{1,2,0',3'\}$
 \item Size $2$ base block within $S$: $\{0,3\}$
  \item Size $2$ base block within $S'$: $\{1',2'\}$
  \item Size $2$ base blocks between $S$ and $S'$:  $\{4,4'\}$
 \end{itemize}

 The base blocks appear in Week 1 of Table~\ref{ten-table-2}  and in subsequent weeks, the blocks are obtained from the previous week by mapping each element $i$ to $i \oplus 1$.
\end{proof}
 
   \begin{table}
 \centering
 \begin{tabular} {|l |l | } \hline
  Description& Base Blocks   \\ \hline
Block of size $6$ & $\{1,3,9,2',5',6'\}$    \\ \hline
Blocks within  $S$ &    $\{2,12\}$, \  $\{4,5\}$, \   \ $\{6,10\}$ \    \\ \hline
  Blocks within   $S'$ &   $\{4',11'\}$,    $\{7',12'\}$, \  $\{8',11'\}$    \\ \hline
  Blocks between $S$ and $S'$ & $\{0,0'\}$, \   \ $\{7,1'\}$, \  \ $\{8,3'\}$,  \  $\{11,9'\}$    \\ \hline
 \end{tabular}
 \caption{Base blocks for a $6^1 2^{10}$-CURD with $n=26$ and $\lambda = 1$.}
 \label{table-base-blocks-26} 
 \end{table}

 \begin{Prop}
 The  base blocks given in Table~\ref{table-base-blocks-26}  generate a CURD with partition $6^1 2^{10}$.
 \label{prop-26}
 \end{Prop}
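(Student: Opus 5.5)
The plan is to realize the base blocks of Table~\ref{table-base-blocks-26} as the output of Theorem~\ref{curd-from-design-thm} with $v=13$ and $k=3$, so $m=2k=6$ and $n=2v=26$. The size-$6$ base block $\{1,3,9,2',5',6'\}$ tells us to take $B=\{1,3,9\}$ and $Y=\{2,5,6\}$ in ${V}=\{0,1,\ldots,12\}$. The proof then reduces to three things: checking that $\{B,Y\}$ generates a cyclic BIBD with $\lambda=1$, reading off $A,C,X,Z$ and the pairings from the table, and verifying conditions (1)--(3).

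First I compute the difference sets modulo $13$.
\begin{itemize}
\item For $B$ the differences are $2,8,6$ and their negatives, so $D_1=\{2,5,6,7,8,11\}$.
\item For $Y$ the differences are $3,4,1$ and their negatives, so $D_2=\{1,3,4,9,10,12\}$.
\end{itemize}
These two sets partition $\{1,\ldots,12\}$. By the classical criterion quoted before Example~\ref{cyclic-ex1}, $\{B,Y\}$ therefore generates a cyclic $(26,13,6,3,1)$-BIBD.

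Next I verify the three conditions using the remaining base blocks.
\begin{enumerate}
\item Condition (1): take $A=\{2,12,4,5,6,10\}$, which leaves $C={V}-B-A=\{0,7,8,11\}$. The pairs $\{2,12\},\{4,5\},\{6,10\}$ give $\pm$-differences $\{3,10\},\{1,12\},\{4,9\}$, whose union is exactly $D_2$.
\item Condition (2): the table as printed lists $\{8',11'\}$, which would repeat $11'$. I would point out that this must be a typo for $\{8',10'\}$, which is the edge drawn in Figure~\ref{fig-base-blocks-26}. With that correction, $X=\{4,7,8,10,11,12\}$ and $Z={V}-Y-X=\{0,1,3,9\}$. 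The pairs $\{4,11\},\{7,12\},\{8,10\}$ give $\{6,7\},\{5,8\},\{2,11\}$, whose union is $D_1$.
\item Condition (3): define $\phi$ by $0\mapsto0$, $7\mapsto1$, $8\mapsto3$, $11\mapsto9$, which gives $z_i-c_i\equiv 0,7,8,11$. The nine cross differences $y_j-b_i$ are $\{1,4,5,12,2,3,6,9,10\}$. Together these two lists partition $\{0,\ldots,12\}$, as required.
\end{enumerate}

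Theorem~\ref{curd-from-design-thm} then yields the $6^12^{10}$-CURD, generated by the listed base blocks under $\oplus$. The only real obstacle is the apparent misprint in the $S'$ row. Resolving it through the figure and the requirement that $X$ be partitioned into pairs is essential, because with $\{8',11'\}$ as printed the first week would not even be a parallel class.
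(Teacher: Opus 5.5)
Your proposal is correct and follows the paper's proof essentially verbatim. It uses the same $B=\{1,3,9\}$, $Y=\{2,5,6\}$, $A$, $C$, $X$, $Z$, pairings and bijection $\phi$, checks the same differences, and then applies Theorem~\ref{curd-from-design-thm}. Your diagnosis of the misprint is also right: the paper's own proof partitions $X$ as $\{4,11\}\cup\{7,12\}\cup\{8,10\}$, consistent with Figure~\ref{fig-base-blocks-26}, so the table entry $\{8',11'\}$ should read $\{8',10'\}$.
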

 
 \begin{proof}
 For the variety set ${V}  = \{0,1,2, \ldots, 12\}$,  one can check that 
 the base blocks $B = \{1,3,9\}$ and $Y = \{2,5,6\}$ modulo $13$  generate a $(26, 13, 6,3,1)$-BIBD.    The modulo $13$ difference sets are $D_1 = \{2,5,6,7,8,11\}$ and $D_2 = \{1,3,4,9,10,12\}$.   We partition ${V} - B$ as $A \cup C$ where $A = \{2,4,5,6,10,12\}$ and $C = \{0,7,8,11\}$, and we
 partition ${V} - Y$ as $X \cup Z$ where $X = \{4,7,8,10,11,12\}$ and $Z = \{0,1,3,9\}$.  
 
 Since $k=3$ we know $\binom{k}{2} = 3$ and we choose to  partition  $A$ as $\{2,12\}  \cup \{4,5\} \cup \{6,10\}$.  The  set of residues modulo $13$ of the elements in   $\{\pm(12-2), \pm(5-4), \pm(10-6) \}$ is $\{1,3,4,9,10,12\}$ which equals $D_2$, so condition (1)  of Theorem~\ref{curd-from-design-thm}
 is satisfied.   Similarly, we partition $X$ as  $\{4,11\}  \cup \{7,12\} \cup \{8,10\}$.  The set of residues modulo $13$ of the elements in  $\{\pm(11-4), \pm(12-7), \pm(10-8) \}$ is $ \{2,5,6,7,8,11\}$ which equals $D_1$, so condition (2) is satisfied.  
 
 Define the bijection $\phi: C \to Z$ as $\phi(0) = 0$, $\phi(7) = 1$, $\phi(8) = 3$, and $\phi(11) = 9$,
  thus
 $\{(\phi(c_i) - c_i) \pmod {13}: 1 \le i \le 4\} = \{0,7,8, 11\}$.  The original base blocks are $B = \{1,3,9\}$ and $Y = \{2,5,6\}$, so $b_1 = 1$, $b_2 = 3$,  $b_3 = 9$, 
 $y_1 = 2$,  and $y_2 = 5$, and $y_3 = 6$.  The set of differences $\{(y_j - b_i) \pmod {13}: 1 \le i,j \le 3\}$ is $\{1,2,3,4,5,6,9,10,12\}$, thus condition (3) is satisfied.
 
 By Theorem~\ref{curd-from-design-thm}, the  base blocks in Table~\ref{table-base-blocks-26} generate a $6^1 \, 2^{10}$-CURD.
\end{proof}

\begin{table}
 \centering
 \begin{tabular} {|l |l | } \hline
  Description & Base Blocks   \\ \hline
  Size $10$ &   $\{1, 10, 16, 18, 37, 2', 20', 36', 32', 33' \} $  \\  \hline
Within $S$ &    $ \{5,6\}, \{9,19\}, \{39,14\}, \{8,26\}, \{21, 17\},   \{31, 38\}, \{23, 11\}, \{4,34\}, \{25, 28\}, \{40,12\} $   \\\hline
Within $S'$ &   $\{10',12'\}, \  \{18',38'\}, \  \{37', 28'\}, \  \{16',11'\}, \  \{1',34'\}, $  \\
&  $ \{21',35'\}, \  \{5',22'\},  \ \{8',27'\},  \ \{9',15'\}, \  \{39',24'\}$\\
  \hline
 Between  &  $\{2,30'\},  \, \{20,13'\}, \,\{36,7'\}, \, \{32,29'\}, \, \{33,3'\},  \, \{15, 23'\}, \, \{27,25'\}, \, \{24,4'\}$ \\ 
 $S$ and $S'$ &  $\{35,40'\}, \, \{22,31'\}, \, \{3,6'\},  \,\{30,19'\}, \, \{13,26'\}, \, \{7,14'\},  \, \{29,17'\},  \, \{0,0'\}$ \\ 
 \hline
 \end{tabular}
 \caption{Base blocks for a $10^1 2^{36}$-CURD with $n=82$ and $\lambda = 1$. }
 \label{table-base-blocks-82} 
 \end{table}

 \begin{Prop}
 The  base blocks given in Table~\ref{table-base-blocks-82}  generate a  CURD with partition $10^1 2^{36}$.
 \label{prop-82}
 \end{Prop}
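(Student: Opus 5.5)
The plan is to verify that the data in Table~\ref{table-base-blocks-82} satisfy the hypotheses of Theorem~\ref{curd-from-design-thm} with $v=41$ and $k=5$. We then invoke that theorem, exactly as in the proofs of Propositions~\ref{ten-verif-prop} and~\ref{prop-26}. The values are consistent with the counts derived in the proof of Theorem~\ref{curd-from-design-thm}: $v=2k^2-2k+1=41$, $m=10$, $n=82$, $\binom{k}{2}=10$ pairs within each of $S$ and $S'$, and $\ell=k^2-2k+1=16$ cross pairs.

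First I read off the underlying design from the size-$10$ base block, taking $B=\{1,10,16,18,37\}$ and $Y=\{2,20,32,33,36\}$. I then compute the multisets of modulo $41$ differences $D_1$ of $B$ and $D_2$ of $Y$, each consisting of $20$ residues. I check that they are sets and together partition $\{1,\ldots,40\}$. By the classical difference criterion quoted before Example~\ref{cyclic-ex1}, this shows that $\{B,Y\}$ generates a cyclic $(82,41,10,5,1)$-BIBD.

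Next I extract the partitions required by the theorem.
\begin{itemize}
\item $A$ is the union of the ten within-$S$ pairs, namely $\{4,5,6,8,9,11,12,14,17,19,21,23,25,26,28,31,34,38,39,40\}$.
\item $C$ is the set of $S$-coordinates of the cross pairs, namely $\{0,2,3,7,13,15,20,22,24,27,29,30,32,33,35,36\}$.
\item $X$ is the set of un-primed labels from the within-$S'$ pairs, and $Z$ is the set of $S'$-coordinates of the cross pairs.
\end{itemize}
I check that $A,B,C$ partition $V$ and that $X,Y,Z$ partition $V$, with $|A|=|X|=20$ and $|C|=|Z|=16$. For condition (1), I compute $\pm(\hat a_i-a_i)\bmod 41$ for the ten within-$S$ pairs and confirm the resulting $20$ residues are exactly $D_2$. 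For condition (2), I do the same for the within-$S'$ pairs and compare with $D_1$.

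Condition (3) is the main bookkeeping step. I list the $25$ residues $(y_j-b_i)\bmod 41$ and confirm they are distinct. I also list the $16$ residues $(z_i-c_i)\bmod 41$ from the cross pairs $\{c_i,\sigma(z_i)\}$, for example $30-2=28$, $13-20=34$, and $0-0=0$. I then verify that the two lists are disjoint and together cover all $41$ residues. Having checked conditions (1)--(3), Theorem~\ref{curd-from-design-thm} yields that the base blocks generate a $10^12^{36}$-CURD.

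The main obstacle is simply the error-prone arithmetic, especially condition (3), where $41$ residues must be matched with no collision. I would organize each computation as a sorted list of residues, so that coverage and disjointness can be read off directly. As a sanity check against transcription errors, I would also confirm that each element of $S\cup S'$ occurs in exactly one base block.
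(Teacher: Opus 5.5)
Your proposal is correct and follows the paper's proof essentially step for step. Like the paper, you read off $B$, $Y$, $A$, $C$, $X$, $Z$ from Table~\ref{table-base-blocks-82}, check that $D_1$ and $D_2$ partition the nonzero residues, verify conditions (1)--(3) of Theorem~\ref{curd-from-design-thm} (the data check out: the $25$ residues $y_j-b_i$ and the $16$ residues $z_i-c_i$ are disjoint and together cover all residues modulo $41$), and then invoke that theorem.
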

 
 \begin{proof}

Let $V$ be the variety set   $ \{0,1,2, \ldots, 40\}$, and let $B = \{1, 10, 16, 18, 37\}$ and $Y = \{2,20, 32, 33, 36\}$.  One can check that in modulo $41$, the
  base blocks $B  $ and $Y $   generate a $(82, 41, 10,5 ,1)$-BIBD.    The  modulo $41$ difference sets   $D_1$ and $D_2$ are  listed below and these partition  $\{1,2,3, \ldots, 40\}$.

 \medskip
 \noindent
 $D_1 = \{\pm 2, \pm 5, \pm 6, \pm 8, \pm 9, \pm 14, \pm 15, \pm 17, \pm 19, \pm 20, \}   $ 

 \smallskip
  \noindent
 $D_2 = \{ \pm 1, \pm 3, \pm 4, \pm 7, \pm 10, \pm 11, \pm 12, \pm 13, \pm 16, \pm 18 \}.$
  \medskip
  
 We partition ${V} - B$ as $A \cup C$  and ${V} - Y$ as $X \cup Z$ where

  \medskip
 \noindent
 $A = \{4,5,6,8,9,11, 12, 14, 17, 19, 21, 23, 25, 26,  28, 31, 34, 38, 39, 40\} $
 
  \smallskip
  \noindent
 $  C = \{0,2,3,7, 13, 15, 20, 22, 24, 27, 29, 30, 32, 33, 35, 36 \}$

  \smallskip
 \noindent
 $X = \{1, 5, 8, 9, 10, 11, 12, 15, 16, 18,21, 22, 24, 27, 28, 34, 35, 37, 38, 39\}$

   \smallskip
 \noindent
 $Z = \{0,3,4, 6,7,13, 14, 17, 19, 23, 25, 26, 29, 30, 31, 40\}.$ 
 \medskip

\begin{table}
\begin{center}
 \begin{tabular}{|l  | l | l | l | l | l | l | l | l | l |l | }  \hline
$a_i$  & 4 & 5  & 8 & 9 &  11 & 12 & 14 & 17 &    25 & 31 \\ \hline
 $\hat{a}_i$& 34 & 6 & 26 & 19 &   23 & 40 & 39 & 21 & 28 &  38 \\  \hline
 $\pm(\hat{a}_i -a_i ) \pmod{41}$ & $\pm 11$ & $\pm 1$ &$\pm 18$ &$\pm 10$ &$\pm 12$ &$\pm 13$ &$\pm 16$ &$\pm 4$ &$\pm 3$ &$\pm 7$  \\ \hline
 \end{tabular}
 \end{center}
 
\medskip
\begin{center}
  \begin{tabular}{|l  | l | l | l | l | l | l | l | l | l |l | }  \hline
$x_i$ & 1 & 5  & 8 & 9 &  10 & 11 & 18 & 21 &    24 & 28 \\ \hline
 $\hat{x}_i$ & 34 & 22 & 27 & 15 &   12 & 16 & 38 & 35 & 39 &  37 \\  \hline
 $\pm(\hat{x}_i - x_i) \pmod{41}$ & $\pm 8$ & $\pm 17$ &$\pm 19$ &$\pm 6$ &$\pm 2$ &$\pm 5$ &$\pm 20$ &$\pm 14$ &$\pm 15$ &$\pm 9$  \\ \hline
 \end{tabular}
 \end{center} 
 
 \caption{The $20$ elements of $A$  (respectively $X$) partitioned into unordered pairs $\{a_i, \hat{a_i}\}$ 
 (respectively  $\{x_i, \hat{x_i}\}$) and the mod $41$ differences for each pair. }
 \label{partition-A-table}
 \end{table}
 Since $k=5$ we know $\binom{k}{2} = 10$ and our partition of $A$  as $\{ \{a_i,\hat{a}_i\}: 1 \le i \le 10 \} \}$ is given in  row 2 of Table~\ref{table-base-blocks-82}.  In  Table~\ref{partition-A-table},   the differences $\pm(\hat{a}_i - a_i) \pmod{41}$ are computed and this set of differences   equals $D_2$, so condition (1)  of Theorem~\ref{curd-from-design-thm}
 is satisfied. 
 Similarly, our partition of $X$ as $\{ \{x_i,\hat{x}_i\}: 1 \le i \le 10 \} \}$ is given in  row 3 of Table~\ref{table-base-blocks-82}.  In Table~\ref{partition-A-table}   the differences $\pm(\hat{x}_i-x_i) \pmod{41}$ are computed and this set of differences   equals $D_1$, so condition (2) is satisfied.

\begin{table}
\begin{center}
\begin{tabular} { |l | l | l | l | l | l | l | l | l | l | l | l | l | l | l | l | l | } \hline
 $c_j$ &  0 & 2 & 3 & 7 & 13 & 15 & 20 & 22 & 24 & 27 & 29 & 30 & 32 & 33 & 35 & 36 \\ \hline
 $\phi(c_j)$ &  0 & 30 & 6 & 14 & 26 & 23 & 13 & 31 & 4 & 25 & 17 & 19 & 29 & 3 & 40 & 7 \\ \hline
 diff &  0 & 28 & 3 & 7 & 13 & 8 & 34 & 9 & 21 & 39 & 29 & 30 & 38 & 11 & 5 & 12 \\ \hline
\end{tabular}
\end{center}
\caption{The mapping  $\phi: C \to Z$ and the differences $\phi(c_j) - c_j \pmod{41}$}
\label{table-C-mapping}
\end{table}

The bijection $\phi: C \to Z$ is given in Table~\ref{table-C-mapping} and the third row shows the quantities $\phi(c_j) - c_j \ \pmod {41}.$
\smallskip
  The original base blocks are $B = \{1, 10, 16, 18, 37\}$ and $Y = \{2, 20, 32, 33, 26\}$, so without loss of generality, $b_1 = 1$, $b_2 = 10$,  $b_3 = 16$, $b_4 = 18$, $b_5 = 37$
 and 
 $y_1 = 2$,   $y_2 = 20$, and $y_3 = 32$, $y_4 = 33$, $y_5 = 36$.  The set of differences $\{(y_j - b_i) \pmod {41}: 1 \le i,j \le 5\}$ is given in Table~\ref{yb-table}, and  this set of $25$ elements together with the $16$ entries in  the third row of Table~\ref{table-C-mapping} partition $V$.   Thus condition (3)  of Theorem~\ref{curd-from-design-thm}
 is satisfied.
 By Theorem~\ref{curd-from-design-thm}, the  base blocks in Table~\ref{table-base-blocks-82}
 generate at $10^1 \, 2^{36}$-CURD.
\end{proof}

 \medskip
\begin{table}
\begin{center}
 \begin{tabular} {c|ccccc}
 $y_j- b_i \pmod{41}$ &    1   &   10   & 16 & 18 & 37 \\ \hline
$2$   & 1 & 33 & 25 & 6 & 27  \\
$20$  & 19 & 10 & 2 & 24 & 4 \\
$32$ & 31 & 22 & 14 & 36 & 16 \\
$33$ & 32 & 23 & 15 & 37 & 17 \\
$36$ & 35 & 26 & 18 & 40 & 20 \\
\end{tabular}
\end{center}
\caption{The set of differences $\{(y_j - b_i) \pmod {41}: 1 \le i,j \le 5\}$ where $B = \{1, 10, 16, 18, 37\}$ and $Y = \{2,20, 32, 33, 36\}$ }
\label{yb-table}
\end{table}

Given the difficulty inherent in constructing CURDS of even moderate size, the reader may be interested to know how we were able to satisfy the conditions of Theorem~\ref{curd-from-design-thm} to produce the $10^12^{36}$-CURD in Proposition~\ref{prop-82}.  We will give a brief explanation of our process here following  the notation developed in Theorem~\ref{curd-from-design-thm}. 
For background on concepts and terminology from abstract algebra, see \cite{Hu03}.
Our construction starts with a judicious choice of base blocks for the initial $(82, 41, 10, 5 ,1)$-BIBD.  Identifying the variety set $V=\lbrace 0, 1, 2,\dots, 40\rbrace$ with the ring $\mathbb{Z}_{41}$, we recall that the set of non-zero elements of $V$ corresponds precisely to the set of invertible elements of $\mathbb{Z}_{41}$, and that this forms a cyclic group of order 40.  The block $B=\lbrace 1, 10, 18, 16, 37\rbrace$ is the (unique) 5-element subgroup of this group.  (Note: the order in which we have written the elements of $B$ is important; it corresponds to selecting 10 as the generator.)  The reader can verify that the set of differences, $D_1$, consists precisely of the union of four of the eight cosets of this subgroup (namely, $2B$, $5B$, $6B$, and $15B$).  The second block, $Y$, is obtained by multiplying $B$ by 2 (mod 41), and thus the set of differences, $D_2$, is also obtained by multiplying $D_1$ by 2 (mod 41).  This implies that $D_2$ is also a union of four cosets, and they turn out to be exactly the four we were missing.  Thus, $D_1$ and $D_2$ partition $\lbrace 1, 2, 3, \dots, 40\rbrace$.

The union of $B$ and $Y$ produces our block of size 10, as described in Theorem~\ref{curd-from-design-thm}.  In order to specify the pairs called for in Theorem~\ref{curd-from-design-thm}, we rely on the following observation.  Let's denote the elements of (the multiplicative subgroup) $B$ by $b_1, b_2, b_3, b_4, b_5$.  Let $\alpha B=\lbrace \alpha b_1, \alpha b_2, \alpha b_3, \alpha b_4, \alpha b_5\rbrace$ and $\beta B=\lbrace \beta b_1, \beta b_2, \beta b_3, \beta b_4, \beta b_5\rbrace$ be two cosets, for some choices of $\alpha$ and $\beta$.  Then the set of corresponding differences, $\lbrace (\alpha-\beta)b_1, (\alpha-\beta)b_2, (\alpha-\beta)b_3, (\alpha-\beta)b_4, (\alpha-\beta)b_5\rbrace$ is {\em also} a coset of $B$ (generated by $\alpha-\beta$).  This observation allows us to construct the sets $A$, $X$, $C$ and $Z$ as unions of cosets.  For example, the set $A$ consists of the cosets $5B=\lbrace 5, 9, 8, 39, 21\rbrace$, $6B=\lbrace 6, 19, 26, 14, 17\rbrace$, $31B=\lbrace 31, 23, 25, 4, 40\rbrace$ and $38B=\lbrace 38, 11, 28, 34, 12\rbrace$.  We paired $5B$ with $6B$, forming the desired pairs by pairing corresponding elements.   The differences given by these five pairs comprise exactly the cosets $B$ and $40B$.  In showing this example, we emphasize again the importance of keeping the elements of each coset consistent with the order we imposed on $B$, as this is what ensures that when we compute the differences of corresponding elements, we again obtain cosets. Similarly, we paired $31B$ with $38B$; the corresponding differences being exactly the cosets $7B$ and $34B$.  The reader can verify that this is exactly what is listed (albeit not organized in this exact way) in the first part of Table~\ref{partition-A-table}.

Continuing to reason in the above fashion, we were able to similarly construct the sets $C$, $X$ and $Z$ in such a way that we satisfied the hypotheses of Theorem~\ref{curd-from-design-thm}.  To be clear, we do not claim that our particular choices of how to match the cosets was either inevitable or unique, merely that by being able to reduce the problem from one concerning 80 elements to one concerning 16 sets, we were able to decrease the complexity enough to proceed by hand.  Looking back at the $6^12^{10}$-CURD constructed in Proposition~\ref{prop-26}, we see that similar reasoning can be applied in that case as well, and we wonder whether the idea expressed here can be extended to construct a larger family of CURDs.

\section{Constructing a CURD where $m$ is a power of an odd prime}

In this section, we will  construct a CURD with partition $m^12^{\frac{m^2-m}{2}}$ in the case where $m=p^f$ for any odd prime $p$ and any positive integer $f$.  We will build such a CURD by first providing a construction of an $m^m$-CURD, and then adapting that to obtain the desired CURD.  In order to fix ideas, we will first give the proof in the case in which $m$ is  an odd prime, before giving the general proof.

\begin{Def} \rm
   For an odd prime $p$, the  design $ {\mathcal D}_p =  ({V}_{p}, {\mathcal 
    B}_{p})$  
    is defined as follows:    $V_{p} = \{ (x,y): 1 \le x,y \le p\}$ and   ${\mathcal 
    B}_{p}$ consists of  blocks $C^r_i $ for $1 \le i,r \le p$ and  blocks 
$C^{\infty}_i$ for $1 \le i \le p$ where
\smallskip

    $C^r_i = \{(x,y) \in V_p:  y \equiv rx+i \pmod p\}, {\rm\ and}$

      $C^{\infty}_i = \{(x,y) \in V_p:   x \equiv i \pmod p\}.$

   \label{prime-design-def}   
\end{Def}

Table~\ref{twenty-five-table} shows the 25 varieties in the  design  ${\mathcal D}_5$ arranged in rows and columns.  The blocks  $C^{\infty}_i $ appear as  the set of varieties in the same vertical line and blocks  $C^r_i $ appear as the set of varieties in a line of slope $r$  (in modulo $5$), for example, ${C}^{1}_1  = 
\{ (1,2), (2,3), (3,4), (4,5), (5,1)\}$.

\begin{table}
\begin{center}
\begin{tabular}{| c ||  c | c | c | c | c | } \hline
& $ C^{\infty}_1$ & $ C^{\infty}_2$ & $ C^{\infty}_3$ & $ C^{\infty}_4$ & $ C^{\infty}_5$ \\ \hline \hline
$ C^5_5$ & $(1,5)$ & $(2,5)$  &  $(3,5)$  &  $(4,5)$  &  $(5,5)$  \\ \hline
$ C^5_4$ & $(1,4)$ & $(2,4)$  &  $(3,4)$  &  $(4,4)$  &  $(5,4)$  \\ \hline
$ C^5_3$ & $(1,3)$ & $(2,3)$  &  $(3,3)$  &  $(4,3)$  &  $(5,3)$  \\ \hline
$ C^5_2$ & $(1,2)$ & $(2,2)$  &  $(3,2)$  &  $(4,2)$  &  $(5,2)$  \\ \hline
$ C^5_1$ & $(1,1)$ & $(2,1)$  &  $(3,1)$  &  $(4,1)$  &  $(5,1)$  \\ \hline
 \end{tabular}

 \end{center}
 \caption{The $25$ varieties in the design ${\mathcal D}_5$ arranged in rows and columns.}
\label{twenty-five-table}
\end{table}

\begin{Prop}
    If $p$ is an odd prime, then the design ${\mathcal D}_p$ is a $p^p$-CURD when the blocks are partitioned into the following $p+1$ parallel classes:  for $1 \le r \le p$, the blocks  in week $r$ are $C^r_1, C^r_2 , \ldots, C^r_p $    and  the blocks in week $p+1$ are $C^{\infty}_1, C^{\infty}_2, \ldots, C^{\infty}_p $.
    \label{planelemma}
\end{Prop}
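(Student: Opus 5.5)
We verify the three requirements of a $p^p$-CURD directly from Definition~\ref{prime-design-def}, working throughout with coordinates as residues modulo $p$ (identifying $p$ with $0$), so that $V_p$ is $\mathbb{Z}_p \times \mathbb{Z}_p$ and the blocks are the lines of the affine plane. Only the fact that $\mathbb{Z}_p$ is a field is needed.

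\emph{Each week is a parallel class of $p$ blocks of size $p$.} Fix $r$ with $1 \le r \le p$. For each $x$ there is exactly one $y$ with $y \equiv rx+i \pmod p$, so $|C^r_i| = p$. A variety $(x,y)$ lies in $C^r_i$ exactly when $i \equiv y - rx \pmod p$, so it lies in exactly one block of week $r$; in particular the $p$ blocks are distinct and pairwise disjoint. Week $p+1$ is similar: $C^\infty_i$ is the column $x \equiv i$, which has $p$ elements, and each variety lies in exactly the column indexed by its first coordinate. Hence every week has partition $p^p$, and the design is class-uniform.

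\emph{Each pair of distinct varieties is together in exactly one block.} Take distinct $(x_1,y_1)$ and $(x_2,y_2)$. If $x_1 \equiv x_2$, they share the block $C^\infty_{x_1}$. They cannot share any $C^r_i$, because that would force $y_1 - y_2 \equiv r(x_1 - x_2) \equiv 0$, contradicting distinctness. If $x_1 \not\equiv x_2$, they are in no common column. They share $C^r_i$ exactly when $r \equiv (y_1-y_2)(x_1-x_2)^{-1}$ and $i \equiv y_1 - r x_1$. Here invertibility modulo the prime $p$ is used, and this yields a unique $r$ in $\{1,\dots,p\}$ and then a unique $i$. So $\lambda = 1$.

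As a consistency check, count: $(p+1)\cdot p \cdot \binom{p}{2} = \binom{p^2}{2}$, which agrees with the uniqueness just shown. There is no real obstacle. The only delicate point is the case $x_1 \ne x_2$, where we must divide by $x_1 - x_2$, and this is exactly where primality of $p$ enters. The oddness of $p$ is not used here, and is presumably needed only in the later adaptation to $2$-blocks.
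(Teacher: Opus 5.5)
Your proof is correct and follows the same route as the paper: the paper also treats the blocks as the lines of the plane over $\mathbb{Z}_p$, observes that each slope class (including $\infty$) partitions $V_p$ into $p$ lines of size $p$, and gets $\lambda=1$ from the fact that two distinct points determine a unique line. Your coordinate computation, which solves for $r$ and $i$ using the invertibility of $x_1-x_2$ modulo $p$, spells out the step that the paper simply cites as a geometric fact.
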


\begin{proof}
    Given the way that we have defined $\mathcal{D}_{p}$, it will be helpful to think geometrically.  For any $r$ (including $\infty$), the collection ${C}^r_1, {C}^r_2 , \ldots, {C}^r_p $ represents the set of lines of slope $r$.  We claim first that the design ${\mathcal D}_p$ is pairwise balanced (with $\lambda=1$).  Since each variety can be thought of as a point in the plane, and the blocks represent all of the lines, the fact that each pair of varieties appears together in a block exactly once is exactly the fact that any two distinct points determine a unique line. To show that $\mathcal{D}_p$ is resolvable, note that for each $r$ (including $\infty$), the collection ${C}^r_1, {C}^r_2 , \ldots, {C}^r_p$ is a parallel class.  Finally, since each parallel class is a partition of $V_p$ into $p$ sets of size $p$, we see that $\mathcal{D}_p$ is a $p^p$-CURD.
\end{proof}

Since $p$ is odd in the definition of ${\mathcal D}_p$, there are an even number of parallel classes in $\mathcal{D}_p$ and they can be considered in pairs.  This motivates the next definition.

\begin{Def} \rm
    Let $q$ be an odd integer and $S$ a set with $|S| = q^2$.  A \emph{double $q$-partition} of $S$ consists of two partitions $\{A_1, A_2, \cdots , A_q \}$  and $\{B_1, B_2, \cdots , B_q \}$ 
    of $S$    so that $|A_i| = |B_i| = q$ for $1 \le i \le q$ and $|A_i \cap B_j| = 1$ for $1 \le i,j \le q$.
    \label{double-def}
\end{Def}

%\begin{example} \rm
    As an example, let $q=5$ and $S = \{(a,b): 1 \le a,b \le 5 \}$.  For $1 \le i \le 5$, let $A_i =  \{(a,i): 1 \le a \le 5\}$ and 
    $B_i = \{(i,b): 1 \le b \le 5\}$.  Then  
    $A_1 \cup A_2 \cup \cdots \cup A_5$ and $B_1 \cup B_2 \cup \cdots \cup B_5$ form a double $5$-partition of $S$.  These correspond to the rows and columns of the entries in Table~\ref{twenty-five-table} where   $A_i = C_i^5$ and $B_i = C_i^{\infty}$.
%\end{example}

In transforming $\mathcal{D}_p$ to a 
 CURD with partition $p^12^{\frac{p^2-p}{2}}$, we will convert a double $q$-partition of a set $S$ into a set of $q$ partitions of $S$, each of which contains one part of size $q$ and the remaining parts of size $2$.  

\begin{Def}
\rm
      Let $q$ be an odd integer and $S$ a set with $|S| = q^2$.  A  \emph{$(q, q^1 2^{\frac{q^2-q}{2}})$-partition} of $S$ consists of $q$ partitions 
      ${\mathcal P}_1, {\mathcal P}_2, \ldots, {\mathcal P}_q$ where each ${\mathcal P}_i$ is a partition of $S$ consisting of one part of size $q$ and $\frac{(q-1)q}{2}$ parts of size 2.
\end{Def}

To achieve this transformation, we use a result about edge coloring from graph theory.
As discussed in Section~\ref{sec-necessary-conditions}, a CURD with $n$ varieties and $\lambda = 1$ can be viewed as a partition of the edges of the complete graph $K_n$ into color classes. We are interested in the case in which each color class is a matching, that is, it consists of a set of edges, no two of which share an endpoint.
For a graph $G$, the \emph{edge chromatic number} $\chi'(G)$ is the minimum number of colors needed to color the edge set of $G$ so that incident edges get different colors.  The following result about the edge chromatic number of a complete graph on an even number of vertices appears in many graph theory texts such as \cite{We01}.
\begin{Thm}
    If $n$ is an integer, then $\chi'(K_{2n}) = 2n-1$.
    \label{edge-chrom-thm}
\end{Thm}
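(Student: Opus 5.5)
The classical proof of Theorem~\ref{edge-chrom-thm} (for $n \ge 1$) has two parts: a lower bound from counting, and an explicit coloring achieving it, the so-called round-robin tournament construction.

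\emph{Lower bound.} Each vertex of $K_{2n}$ has degree $2n-1$. In a proper edge coloring, the edges incident to a single vertex must receive pairwise distinct colors, so at least $2n-1$ colors are needed. Hence $\chi'(K_{2n}) \ge 2n-1$.

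\emph{Upper bound.} Label the vertices $\infty, 0, 1, \ldots, 2n-2$, and view $0, \ldots, 2n-2$ as residues modulo $2n-1$, placed at the vertices of a regular $(2n-1)$-gon with $\infty$ at the center. For each color $c \in \{0, 1, \ldots, 2n-2\}$, assign color $c$ to:
\begin{itemize}
\item the edge $\{\infty, c\}$, and
\item every edge $\{i, j\}$ with $i \neq j$ in $\mathbb{Z}_{2n-1}$ and $i + j \equiv 2c \pmod{2n-1}$.
\end{itemize}
This assigns each edge exactly one color.
\begin{itemize}
\item An edge $\{\infty, c\}$ gets color $c$ only.
\item For an edge $\{i, j\}$, the number $2n-1$ is odd, so $2$ is invertible modulo $2n-1$, and $c \equiv (i+j)\cdot 2^{-1}$ is uniquely determined.
\end{itemize}

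Next, each color class is a matching. Fix $c$ and a vertex $i \neq c$ in $\mathbb{Z}_{2n-1}$. Its partner in color $c$ is $j = 2c - i$, which is unique. Moreover $j \neq i$, since $i \equiv 2c - i$ would force $2i \equiv 2c$, that is, $i = c$. The vertex $c$ itself has no partner among the residues, because its would-be partner is $2c - c = c$ itself; its only edge of color $c$ is the one to $\infty$. Thus every vertex lies on exactly one edge of color $c$, so color class $c$ is a perfect matching with $n$ edges.

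Finally, $2n-1$ colors times $n$ edges per color gives $n(2n-1) = \binom{2n}{2}$ edges, which is consistent with every edge being colored exactly once. This shows $\chi'(K_{2n}) \le 2n-1$, and together with the lower bound, equality holds.

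The one point needing care is the oddness of $2n-1$. It is what makes $2$ invertible modulo $2n-1$, so that each edge has a unique color, and it is what makes $c$ the unique vertex left unmatched among the residues, so that $\infty$ can be attached to it.
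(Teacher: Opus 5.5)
Your proof is correct and uses the same round-robin construction that the paper sketches while citing West. The paper's first color class $\{1,x\},\{2,2n-1\},\ldots,\{n,n+1\}$ is the edge to $x$ together with the pairs satisfying $i+j\equiv 2 \pmod{2n-1}$, and its cyclic shifts are exactly your classes $\{\infty,c\}$ plus the pairs with $i+j\equiv 2c$. The paper leaves the degree lower bound to the citation, so you supply that half, and restricting to $n\ge 1$ is appropriate because the statement as written fails for $n=0$.
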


A standard proof of Theorem~\ref{edge-chrom-thm} is constructive and uses the following coloring.   If the vertex set is  $\{x,  1, 2, 3, \ldots, 2n-1\}$,  then  the edges that receive color $1$ are  $\{1,x\}, \{2, 2n-1\}, \{3, 2n-2\}, \ldots, \{n, n+1\} $ and subsequent color classes are 
determined by leaving $x$ unchanged and adding $1$ to each remaining number in modulo $2n-1$.   Note that each color class is a partition of the $2n$ vertices into $n$ pairs. The first two color classes are shown in each graph $G_j$ in Figure~\ref{fig-Gj} where $n=3$ and the vertices are labeled $x, (j,1), (j,2), (j,3), (j,5), (j,5)$ instead of $x, 1,2,3,4,5$.

If $q$ is an odd integer and $S$ is a set with $|S| = q^2$, 
the  next lemma shows how to convert a double $q$-partition of $S$
into  a $(q, q^1 2^{\frac{q^2-q}{2}})$-partition  of a set $S$.  The construction involves  edge colorings of a sequence of complete graphs $G_j$ and is illustrated in Figure~\ref{fig-Gj}.

\begin{lemma}\label{double-partition}
  Let $q$ be an odd integer and $S$ a set with $|S| = q^2$.  If  $\{A_1, A_2, \ldots, A_q\}$ and  $\{B_1, B_2, \ldots, B_q\}$ form a double $q$-partition of $S$ then there exists a $(q, q^1 2^{\frac{q^2-q}{2}})$-partition $    {\mathcal P}_1, {\mathcal P}_2, \ldots, {\mathcal P}_q$ of $S$ so that if $u,v \in S$  then $u$ and $v$ appear together in a set in the list 
  $A_1, A_2, \ldots, A_q, B_1, B_2, \ldots, B_q$ if and only if they appear together in  a set in the list ${\mathcal P}_1, {\mathcal P}_2, \ldots, {\mathcal P}_q$.
    
\end{lemma}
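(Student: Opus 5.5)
We index the elements of $S$ by the double partition: since $|A_i \cap B_j| = 1$, write $s_{i,j}$ for the unique element of $A_i \cap B_j$. Then $A_i = \{s_{i,1},\ldots,s_{i,q}\}$ and $B_j = \{s_{1,j},\ldots,s_{q,j}\}$. The pairs we must cover are exactly the "row pairs" $\{s_{i,j},s_{i,j'}\}$ and the "column pairs" $\{s_{i,j},s_{i',j}\}$. Each such pair must appear in exactly one part of exactly one $\mathcal P_k$, and no other pair may appear. Since $\mathcal P_1,\ldots,\mathcal P_q$ are $q$ partitions, we aim to use each $A_i$ once as a $q$-part in some $\mathcal P_k$. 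Then the pairs inside the columns $B_j$ must be delivered by $2$-parts.

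Here is the construction. Let $\mathcal P_k$ have $A_k$ as its part of size $q$. For each column $j$, form the graph $G_j$ on $q+1$ vertices: the $q$ elements $s_{1,j},\ldots,s_{q,j}$ of $B_j$ together with an auxiliary vertex $x$. Since $q+1$ is even, Theorem~\ref{edge-chrom-thm} and the standard cyclic coloring described after it give a proper edge coloring of $G_j \cong K_{q+1}$ with $q$ colors, each color class a perfect matching. In color class $k$, the vertex $x$ is matched to exactly one vertex, and as $k$ varies $x$ is matched to each of the $q$ other vertices once. We relabel colors so that in color $k$ the vertex $x$ is matched to $s_{k,j}$. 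This is possible because the $q$ colors biject with the $q$ neighbors of $x$, and we use the same relabeling rule in every $G_j$. Define $\mathcal P_k$ to be $A_k$ together with, for every $j$, the $\frac{q-1}{2}$ edges of color $k$ in $G_j$ that avoid $x$, taken as $2$-parts.

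Next we verify that each $\mathcal P_k$ is a partition of $S$. Color class $k$ in $G_j$ covers every vertex of $B_j$ except $s_{k,j}$ (the partner of $x$), and $s_{k,j} \in A_k$. So $\mathcal P_k$ covers each element of $S$ exactly once. It has one part of size $q$ and $q \cdot \frac{q-1}{2}$ parts of size $2$, as required.

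For the "if and only if," consider first a column pair $\{s_{i,j},s_{i',j}\}$ with $i\ne i'$. It is an edge of $G_j$ not incident to $x$, so it lies in exactly one color class and hence in exactly one $2$-part among the $\mathcal P_k$. A row pair lies in $A_i$, which is a part only of $\mathcal P_i$. Conversely, every part of every $\mathcal P_k$ is either some $A_k$ or a pair inside some $B_j$, so no other pairs arise. The only step needing care is the color relabeling so that $x$'s partner in color $k$ is exactly $s_{k,j}$ for all $j$ simultaneously. This is immediate because we choose the labeling independently in each $G_j$, and the edges at $x$ receive distinct colors.
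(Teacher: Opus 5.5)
Your proof is correct and follows essentially the same construction as the paper. The shared construction is an auxiliary vertex $x$ adjoined to each column $B_j$, a proper $q$-edge-coloring of $K_{q+1}$ recolored so that the edge $x s_{k,j}$ gets color $k$, and $\mathcal P_k$ taken as $A_k$ plus the remaining color-$k$ edges. The only difference is that you argue directly that each pair is covered exactly once, where the paper uses a counting argument. For that direct argument to be airtight, state explicitly that $|A_i\cap B_j|=1$ prevents a row pair from lying inside any $B_j$, and a column pair from lying inside any $A_k$.
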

   
    \begin{proof}
        By Definition~\ref{double-def} we know $|A_i \cap B_j| = 1$ for $1 \le i,j \le q$, so we can let $v_{ij}$ be the vertex in $A_i \cap B_j$.  Further, let $G_j$ be the complete graph on $q+1$ vertices with vertex set $B_j \cup \{x\}$.  Using Theorem~\ref{edge-chrom-thm} we know $\chi'(G_j) = q$.  Color the edges of each $G_j$ properly using $q$ colors so that the edge between $x$ and $v_{ij}$ receives color $i$.  
        Each vertex of $G_j$ has degree $q$, and so it is incident to an edge of each of the $q$ colors.  There are  $q+1$ vertices in $G_j$, so there are $\frac{q+1}{2}$ edges of each color in our coloring of $G_j$.

        Let partition ${\mathcal P}_i$ consist of the size $q$ set $A_i$ and the pairs of vertices in $B_j - \{v_{ij} \}$ corresponding to the color $i$ edges of $G_j$ for $1 \le j \le q$.  Since we are not including the color $i$ edge between $x$ and $v_{ij}$, there are  $\frac{q-1}{2}$ color $i$ edges remaining  in each of $G_1, G_2, \ldots, G_q$ and hence
         $\frac{(q-1)q}{2}$ parts of size 2 in ${\mathcal P}_i$ for each $i$. 

         We next show that ${\mathcal P}_i$ is a partition of $S$ for each $i$, that is, each element $v$ of $S$ is in a unique part of ${\mathcal P}_i$.  If $v \in A_i$ then it appears in the size $q$ part of ${\mathcal P}_i$, and by definition, $v = v_{ij}$ for each $j$, so $v$ is not included in any other  size 2 part of ${\mathcal P}_i$. If $v \not\in A_i$ then by Definition~\ref{double-def}, there exists a unique $j$ for which $v \in B_j$ and by our choice of $v_{ij}$ we know $v \neq v_{ij}$.  Then there is exactly one color $i$ edge incident to $v$ in $G_j$, and hence $v$ appears in exactly one part of size 2 in ${\mathcal P}_i$.

         Finally, we verify the last assertion in the theorem.  Suppose $u,v \in S$ and they appear together in a set in the list  
  $A_1, A_2, \ldots, A_q, B_1, B_2, \ldots, B_q$.  If $u,v \in A_i$ for some $i$, then they appear together in partition ${\mathcal P}_i$.  Otherwise, $u,v \in B_j$ for some $j$, so $u$ and $v$ are vertices in the graph $G_j$, and neither is equal to $v_{ij}$.      The edge $uv$  has some color $k$ in $G_j$ and thus $u$ and $v$ appear in a part of size 2 in the partition ${\mathcal P}_k$.  Thus $u$ and $v$ appear together in a set in the list  ${\mathcal P}_1, {\mathcal P}_2, \ldots, {\mathcal P}_q$.

  A straightforward counting argument shows that no pair of vertices can appear twice in a set in the list  ${\mathcal P}_1, {\mathcal P}_2, \ldots, {\mathcal P}_q$ because there are $\frac{q^2(q-1)}{2}$ pairs of elements appearing together in  one of $B_1, B_2, \ldots, B_q$, and the same number appearing together in ${\mathcal P}_1, {\mathcal P}_2, \ldots, {\mathcal P}_q$.
           \end{proof}

Figure~\ref{fig-Gj} shows the vertex sets for the graphs $G_j$ in the case $q=5$ where the $A_i$ represent the horizontal lines and the $B_j$ represent vertical lines in Table~\ref{twenty-five-table}.  Following the proof of Theorem~\ref{edge-chrom-thm}, the edges for color 1 are shown in red and the edges for color 2 are shown in blue.  The blue edges are obtained geometrically from the red edges by rotating the figure and the subsequent color classes can be obtained by additional rotations.   Thus the partition ${\mathcal P}_1$ consists of the $5$-set $\{(1,1), (2,1), (3,1), (4,1), (5,1)\}$ (shown in Figure~\ref{fig-Gj} as the vertices paired with $x$) and the ten  $2$-sets  $\{(1,2)(1,5)\}, \{(1,3)(1,4)\}, \ldots, \{(5,3)(5,4)\}$
corresponding to the endpoints of the solid red edges in Figure~\ref{fig-Gj}.

\begin{figure} 

\newgeometry{left=0cm, right=0cm}

\begin{tikzpicture}[xscale=0.675, yscale=1.17]
\tikzstyle{vertex}=[circle,fill=black!25,minimum size=10pt,inner sep=3pt]
\begin{scope}[shift={(0,0)}]
\node[vertex,black][label=below:$x$] (0) at (1,1.2){};
\node[vertex,black][label=below:${(1,4)}$] (1) at (0,0.2) {}; 
\node[vertex,black][label=below:${(1,3)}$] (2) at (2,0.2) {}; 
\node[vertex,black][label=above:${(1,2)}$] (3) at (2.4,1.74) {}; 
\node[vertex,black][label=above:${(1,1)}$] (4) at (1,2.8) {}; 
\node[vertex,black][label=above:${(1,5)}$] (5) at (-0.4,1.74) {}; 
\draw[<->,shorten <=2pt][blue,thick] (2) -- (4) -- cycle;
\draw[<->,shorten <=2pt][blue,thick] (1) -- (5) -- cycle;
\node at (1.2,-1) {$j=1$};
\draw[<->,shorten <=2pt][red,thick] (3) -- (5) -- cycle;
\draw[<->,shorten <=2pt][red,thick] (1) -- (2) -- cycle;
\draw[<->,shorten <=2pt][red,thick, dashed] (0) -- (4) -- cycle;
\draw[<->,shorten <=2pt][blue, thick,dashed] (0) -- (3) -- cycle;
\end{scope}

\begin{scope}[shift={(4.5,0)}]
\node[vertex,black][label=below:$x$] (0) at (1,1.2){};
\node[vertex,black][label=below:${(2,4)}$] (1) at (0,0.2) {}; 
\node[vertex,black][label=below:${(2,3)}$] (2) at (2,0.2) {}; 
\node[vertex,black][label=above:${(2,2)}$] (3) at (2.4,1.74) {}; 
\node[vertex,black][label=above:${(2,1)}$] (4) at (1,2.8) {}; 
\node[vertex,black][label=above:${(2,5)}$] (5) at (-0.4,1.74) {}; 
\draw[<->,shorten <=2pt][blue,thick] (2) -- (4) -- cycle;
\draw[<->,shorten <=2pt][blue,thick] (1) -- (5) -- cycle;
\draw[<->,shorten <=2pt][red,thick] (3) -- (5) -- cycle;
\draw[<->,shorten <=2pt][red,thick] (1) -- (2) -- cycle;
\draw[<->,shorten <=2pt][red, thick,dashed] (0) -- (4) -- cycle;
\draw[<->,shorten <=2pt][blue,thick, dashed] (0) -- (3) -- cycle;
\node at (1.2,-1) {$j=2$};
\end{scope}

\begin{scope}[shift={(9,0)}]
\node[vertex,black][label=below:$x$] (0) at (1,1.2){};
\node[vertex,black][label=below:${(3,4)}$] (1) at (0,0.2) {}; 
\node[vertex,black][label=below:${(3,3)}$] (2) at (2,0.2) {}; 
\node[vertex,black][label=above:${(3,2)}$] (3) at (2.4,1.74) {}; 
\node[vertex,black][label=above:${(3,1)}$] (4) at (1,2.8) {}; 
\node[vertex,black][label=above:${(3,5)}$] (5) at (-0.4,1.74) {}; 
\draw[<->,shorten <=2pt][blue,thick] (2) -- (4) -- cycle;
\draw[<->,shorten <=2pt][blue,thick] (1) -- (5) -- cycle;
\draw[<->,shorten <=2pt][red,thick] (3) -- (5) -- cycle;
\draw[<->,shorten <=2pt][red,thick] (1) -- (2) -- cycle;
\draw[<->,shorten <=2pt][red, dashed,thick] (0) -- (4) -- cycle;
\draw[<->,shorten <=2pt][blue, dashed,thick] (0) -- (3) -- cycle;
\node at (1.2,-1) {$j=3$};
\end{scope}

\begin{scope}[shift={(13.5,0)}]
\node[vertex,black][label=below:$x$] (0) at (1,1.2){};
\node[vertex,black][label=below:${(4,4)}$] (1) at (0,0.2) {}; 
\node[vertex,black][label=below:${(4,3)}$] (2) at (2,0.2) {}; 
\node[vertex,black][label=above:${(4,2)}$] (3) at (2.4,1.74) {}; 
\node[vertex,black][label=above:${(4,1)}$] (4) at (1,2.8) {}; 
\node[vertex,black][label=above:${(4,5)}$] (5) at (-0.4,1.74) {}; 
\draw[<->,shorten <=2pt][blue,thick] (2) -- (4) -- cycle;
\draw[<->,shorten <=2pt][blue,thick] (1) -- (5) -- cycle;
\draw[<->,shorten <=2pt][red,thick] (3) -- (5) -- cycle;
\draw[<->,shorten <=2pt][red,thick] (1) -- (2) -- cycle;
\draw[<->,shorten <=2pt][red, dashed,thick] (0) -- (4) -- cycle;
\draw[<->,shorten <=2pt][blue, dashed,thick] (0) -- (3) -- cycle;
\node at (1.2,-1) {$j=4$};
\end{scope}

\begin{scope}[shift={(18,0)}]
\node[vertex,black][label=below:$x$] (0) at (1,1.2){};
\node[vertex,black][label=below:${(5,4)}$] (1) at (0,0.2) {}; 
\node[vertex,black][label=below:${(5,3)}$] (2) at (2,0.2) {}; 
\node[vertex,black][label=above:${(5,2)}$] (3) at (2.4,1.74) {}; 
\node[vertex,black][label=above:${(5,1)}$] (4) at (1,2.8) {}; 
\node[vertex,black][label=above:${(5,5)}$] (5) at (-0.4,1.74) {}; 
\draw[<->,shorten <=2pt][blue,thick] (2) -- (4) -- cycle;
\draw[<->,shorten <=2pt][blue,thick] (1) -- (5) -- cycle;
\draw[<->,shorten <=2pt][red,thick] (3) -- (5) -- cycle;
\draw[<->,shorten <=2pt][red,thick] (1) -- (2) -- cycle;
\draw[<->,shorten <=2pt][red, dashed,thick] (0) -- (4) -- cycle;
\draw[<->,shorten <=2pt][blue, dashed,thick] (0) -- (3) -- cycle;
\node at (1.2,-1) {$j=5$};
\end{scope}
\end{tikzpicture}
\restoregeometry

\caption{The vertex sets for graphs $G_j$ when $q=5$ with the edges shown for color 1 in red and color 2 in blue.}
\label{fig-Gj}
\end{figure}

We now have the tools to transform the $p^p$-CURD ${\mathcal D}_p$ into 
a  CURD with partition $p^12^{\frac{p^2-p}{2}}$.

\begin{Thm}\label{primecurd}
If $p$ is an odd prime then we  can construct a  CURD with partition $p^12^{\frac{p^2-p}{2}}$.
\end{Thm}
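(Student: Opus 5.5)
We start from the $p^p$-CURD ${\mathcal D}_p$ of Proposition~\ref{planelemma}, which has $p+1$ parallel classes (weeks $1,\ldots,p$ given by the slopes $r$, and week $p+1$ given by the vertical lines $C^{\infty}_i$). Since $p$ is odd, $p+1$ is even. We group these parallel classes into $\frac{p+1}{2}$ pairs, for instance pairing week $2s-1$ with week $2s$ for $1 \le s \le \frac{p+1}{2}$; the last pair contains the class of slope $p$ and the class $\infty$. Each pair is then converted, via Lemma~\ref{double-partition}, into $p$ parallel classes of type $p^1 2^{\frac{p^2-p}{2}}$. This yields $p\cdot\frac{p+1}{2}$ weeks in total. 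The count agrees with Theorem~\ref{nec-cond-1}, since with $n=m^2$, $m=p$ we get $w=\frac{p^2(p^2-1)}{p^2-2p+p^2}=\frac{p(p+1)}{2}$.

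The key step is to check that each pair of parallel classes $\{C^r_1,\ldots,C^r_p\}$ and $\{C^{s}_1,\ldots,C^{s}_p\}$ with $r\ne s$ (where $s$ may be $\infty$) forms a double $p$-partition of $V_p$ in the sense of Definition~\ref{double-def}. Each class is a partition of $V_p$ into $p$ blocks of size $p$, so only $|C^r_i\cap C^s_j|=1$ needs checking. This intersection condition is the statement that two non-parallel lines in the plane over $\mathbb{Z}_p$ meet in exactly one point. Concretely, for finite slopes $r\ne s$ we solve $rx+i\equiv sx+j \pmod p$, giving $x\equiv (j-i)(r-s)^{-1}$; this uses that $p$ is prime, so $r-s$ is invertible. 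For $s=\infty$ we simply set $x\equiv j$ and solve for $y$. This is the only place where primality is genuinely used, and it is routine.

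Next we apply Lemma~\ref{double-partition} with $q=p$ to each of the $\frac{p+1}{2}$ double $p$-partitions. This gives partitions ${\mathcal P}^{(s)}_1,\ldots,{\mathcal P}^{(s)}_p$ of $V_p$, each with one part of size $p$ and $\frac{p^2-p}{2}$ parts of size $2$. The new design takes all of these as its parallel classes, so class-uniform resolvability with partition $p^12^{\frac{p^2-p}{2}}$ follows immediately from the lemma.

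It remains to verify $\lambda=1$, and this is the step needing the most care. By the lemma, a pair $u,v$ lies together in some ${\mathcal P}^{(s)}_i$ if and only if it lies together in a block of the $s$-th pair of original classes. Since ${\mathcal D}_p$ has $\lambda=1$, every pair of points occurs in exactly one original block, hence in exactly one pair of classes, hence at least once in the new design. To rule out repetition within a single group, I would rely on the counting argument at the end of the lemma's proof. As a cleaner global alternative, the new design has $\frac{p(p+1)}{2}$ weeks, each covering $\binom{p}{2}+\frac{p^2-p}{2}=p^2-p$ pairs, for a total of $\frac{p^2(p^2-1)}{2}=\binom{p^2}{2}$. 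Since every pair is covered at least once and the total equals $\binom{p^2}{2}$, every pair is covered exactly once, which completes the proof.
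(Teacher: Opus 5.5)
Your proposal is correct and follows essentially the same route as the paper: pair the $p+1$ parallel classes of $\mathcal{D}_p$ into $\frac{p+1}{2}$ double $p$-partitions and apply Lemma~\ref{double-partition} to each. The differences are cosmetic or add detail the paper leaves implicit. The paper pairs slope $d$ with slope $p-d$ and slope $0$ with $\infty$ rather than consecutive weeks. You also check explicitly that non-parallel lines meet in exactly one point, and your global count $\frac{p(p+1)}{2}(p^2-p)=\binom{p^2}{2}$ confirms that no pair is covered twice.
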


\begin{proof}
We begin with the CURD $\mathcal{D}_p$. Pair off the parallel classes into $\frac{p+1}{2}$ pairs as follows.  Define 
$$\mathcal{Y}_0=\lbrace C_1^0,\dots, C_p^0\} \cup \{ C_1^\infty,\dots, C_p^\infty\rbrace$$
and for each $d$: $1\le d\le\frac{p-1}{2}$ define
$$\mathcal{Y}_d=\lbrace C_1^d,\dots, C_p^d\} \cup \{ C_1^{p-d},\dots, C_p^{p-d}\rbrace$$
For each $d$ ($0\le d\le\frac{p-1}{2}$), note that $\mathcal{Y}_d$ is a double $p$-partition by Proposition~\ref{planelemma}.
Replace $\mathcal{Y}_d$ with the $(p, p^1 2^{\frac{p^2-p}{2}})$-partition constructed in Lemma~\ref{double-partition}, and denote  it by $\mathcal{P}^d_1, \mathcal{P}^d_2, \ldots, \mathcal{P}^d_p$.  We define 
design $\mathcal{D}'_p$ where the variety set is ${V}_p$ and the set of blocks is 
 $\mathcal{B}'_p$,  defined by
$$\mathcal{B}'_p=\bigcup_{\substack{1\le i\le p\\0\le d\le\frac{p-1}{2}}}\mathcal{P}_i^d.$$

The verification that $\mathcal{D}'_p$ is a CURD relies heavily on the fact that $\mathcal{D}_p$ is a CURD, together with Lemma~\ref{double-partition}.  By Proposition~\ref{planelemma}, any two varieties appear together in exactly one block of $\mathcal{D}_p$.  By Lemma~\ref{double-partition}, and the construction of $\mathcal{D}'_p$, two varieties appear together in a block in $\mathcal{D}'_p$ if and only if they appear together in a block in $\mathcal{D}_p$.  Thus, $\mathcal{D}'_p$ is pairwise balanced (with $\lambda=1$).  Further relying on Lemma~\ref{double-partition}, since each $\mathcal{P}_i^d$ is a partition of $V_p$, we see that $\mathcal{D}'_p$ is resolvable where the  $\mathcal{P}_i^d$ are the parallel classes.  Finally, since each partition $\mathcal{P}_i^d$ consists of one set of size $p$ and $\frac{p^2-p}{2}$ sets of size 2, we see that $\mathcal{D}'_p$ is a CURD with the desired parameters.
\end{proof}

By adapting the arguments of this section to finite fields, we can obtain a stronger version of Theorem~\ref{primecurd}.  Since the core argument is essentially the same, we will give abbreviated proofs where appropriate.

If $p$ is an odd prime and $f$ is a positive integer, then it is well known that there is a (unique) finite field of size $q=p^f$, which we will call $F_q$.  By replacing arithmetic modulo $p$ with arithmetic in $F_q$, we can proceed as before.

\begin{Def} \rm 
   Let $p$ be an odd prime, let $f$ be a positive integer, and set $q=p^f$. Define the  design $ {\mathcal D}_q =  ({V}_{q}, {\mathcal 
    B}_{q})$  as follows:    $V_q = \{ (x,y): x,y \in F_q\}$ and   ${\mathcal 
    B}_{q}$ consists of  blocks $C^r_i $ for $i,r \in F_q$ and  blocks 
$C^{\infty}_i$ for $i\in F_q$ where
\smallskip

    $C^r_i = \{(x,y): x,y \in F_q$ and $y = rx+i\}, {\rm\ and}$

      $C^{\infty}_i = \{(x,y):  x,y \in F_q$ and $x = i \}.$

   \label{primepower-design-def}   
\end{Def}

The proof that ${\mathcal D}_q$ is a CURD is completely analogous to the proof of Proposition~\ref{planelemma}.

\begin{Prop}
    If $p$ is an odd prime, $f$ is a positive integer, and $q=p^f$ then the design ${\mathcal D}_q$ is a $q^q$-CURD where parallel class $r$ consists of the blocks $C^r_1, C^r_2 , \ldots, C^r_p $, and $r$ ranges over all of the elements of $F_q$, including the symbol $\infty$.
    \label{planepowerlemma}
\end{Prop}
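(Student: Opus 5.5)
The plan is to mirror the proof of Proposition~\ref{planelemma}, reading $\mathcal{D}_q$ as the affine plane over $F_q$. There are three things to check: every block has size $q$, each of the $q+1$ classes is a partition of $V_q$, and any two distinct points lie in exactly one common block. Note that the parallel class indexed by $r$ consists of the $q$ blocks $C^r_i$ for $i \in F_q$. The statement lists them as $C^r_1,\dots,C^r_p$, but they should be indexed by $F_q$.

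\textbf{Block sizes and resolvability.} Fix $r \in F_q$. For each $x \in F_q$ there is exactly one $y$ with $y = rx+i$, so $|C^r_i| = q$. Each point $(x,y)$ lies in exactly one $C^r_i$, namely the one with $i = y - rx$. Hence $\{C^r_i : i \in F_q\}$ partitions $V_q$ into $q$ blocks of size $q$. Likewise $|C^\infty_i| = q$, and $(x,y)$ lies only in $C^\infty_x$, so the vertical lines also form such a partition. This gives $q+1$ parallel classes, each with partition $q^q$.

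\textbf{Balance.} Take distinct points $(x_1,y_1)$ and $(x_2,y_2)$.
\begin{itemize}
\item If $x_1 = x_2$, they share $C^\infty_{x_1}$. They share no $C^r_i$: that would force $y_1 = rx_1 + i = y_2$, contradicting distinctness.
\item If $x_1 \neq x_2$, they share no vertical block. A common block $C^r_i$ requires $y_1 - y_2 = r(x_1 - x_2)$. Because $F_q$ is a field, $x_1 - x_2$ is invertible, so $r = (y_1-y_2)(x_1-x_2)^{-1}$ is uniquely determined, and then $i = y_1 - rx_1$ is unique.
\end{itemize}
So every pair lies in exactly one block, and $\lambda = 1$.

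The only step needing care is this invertibility in the balance argument. It is exactly where the field structure of $F_q$ replaces arithmetic modulo a prime, and why one cannot simply use $\mathbb{Z}_{p^f}$. Everything else is the routine counting already done in Proposition~\ref{planelemma}. As a consistency check, the class count $w = q+1$ matches Theorem~\ref{nec-cond-1} with $n = q^2$.
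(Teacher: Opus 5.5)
Your proof is correct and follows the paper's route: identify $V_q$ with the affine plane over $F_q$, take the $q+1$ slope classes as the parallel classes, and use the fact that two distinct points lie on a unique line. You write out the checks the paper leaves implicit, including the invertibility of $x_1-x_2$, and you are right that the classes should be indexed by $i \in F_q$ rather than $1,\dots,p$.

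The one slip is your closing consistency check. Theorem~\ref{nec-cond-1} concerns partitions $m^1 2^{\frac{n-m}{2}}$, and with $n=q^2$, $m=q$ it gives $\frac{q(q+1)}{2}$ (the number of weeks in Theorem~\ref{primepowercurd}), not $q+1$. The count $q+1$ for a $q^q$-CURD comes instead from $\binom{q^2}{2}\big/\bigl(q\binom{q}{2}\bigr)$.
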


\begin{proof}
    As in the proof of Proposition~\ref{planelemma}, we think of $V_q$ as a plane (a 2-dimensional vector space over $F_q$), and the blocks $\mathcal{B}_q$ as the set of lines in this plane.  Under this identification, the proof is exactly analogous to the proof of Proposition~\ref{planelemma}.
\end{proof}

A construction of a  CURD with partition $q^12^{\frac{q^2-q}{2}}$ (for $q$ a power of an odd prime) also follows analogously.

\begin{Thm}\label{primepowercurd}
    If $p$ is an odd prime,  $f$ is a positive integer, and  $q=p^f$ then we   can construct a  CURD with partition $q^12^{\frac{q^2-q}{2}}$.
\end{Thm}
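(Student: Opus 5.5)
The plan is to copy the proof of Theorem~\ref{primecurd}, replacing arithmetic modulo $p$ with arithmetic in $F_q$. We start from the $q^q$-CURD $\mathcal{D}_q$ of Proposition~\ref{planepowerlemma}. It has $q+1$ parallel classes, one for each slope $r \in F_q \cup \{\infty\}$. Since $q$ is odd, $q+1$ is even, so these classes can be grouped into $\frac{q+1}{2}$ disjoint pairs. We then apply Lemma~\ref{double-partition} to each pair. The lemma only requires $q$ odd and $|S| = q^2$, and nothing in it uses primality.

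The pairing is the only place where we need something beyond the prime case. There we paired slope $d$ with slope $p-d$, together with $\{0,\infty\}$. Over $F_q$ we do the same thing abstractly: $\mathcal{Y}_0 = \{C^0_i\} \cup \{C^\infty_i\}$, and the slopes in $F_q^{*}$ are paired as $\{r,-r\}$. This works because $q$ is odd, so $-1 \neq 1$ in $F_q$ and hence $r \neq -r$ for $r \neq 0$. That gives $\frac{q-1}{2}$ pairs, and any perfect matching of the $q$ nonzero slopes together with $\infty$ would do just as well. For each pair $\mathcal{Y}_d$, we check that it is a double $q$-partition of $V_q$. Each of its two classes partitions $V_q$ into $q$ lines of size $q$, since each is a parallel class of Proposition~\ref{planepowerlemma}. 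Two lines of different slopes meet in exactly one point, because the system $y = rx+i$, $y = sx+j$ with $r \ne s$ has the unique solution $x = (j-i)(r-s)^{-1}$. The same holds if one line is vertical, $x = i$. So $|A_i \cap B_j| = 1$ for all $i,j$.

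Lemma~\ref{double-partition} then turns each $\mathcal{Y}_d$ into partitions $\mathcal{P}^d_1,\dots,\mathcal{P}^d_q$, each consisting of one $q$-set and $\frac{q^2-q}{2}$ pairs. A pair of points lies together in some $\mathcal{P}^d_i$ exactly when it lies together in a block of $\mathcal{Y}_d$. We let $\mathcal{D}'_q$ have as its blocks all parts of all $\mathcal{P}^d_i$.

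To verify that $\mathcal{D}'_q$ is a CURD, note first that two distinct points lie on exactly one line of $\mathcal{D}_q$, and that line belongs to exactly one $\mathcal{Y}_d$. So each pair is covered by some block of $\mathcal{D}'_q$. We then show it is covered exactly once. Within a single $\mathcal{Y}_d$ this is the counting argument at the end of Lemma~\ref{double-partition}. Across different $d$, a second covering would force the pair to lie together in blocks of two different $\mathcal{Y}_d$, contradicting $\lambda = 1$ in $\mathcal{D}_q$. Each $\mathcal{P}^d_i$ is a parallel class with the required block sizes. As a final consistency check, the resulting $\frac{q+1}{2}\cdot q$ weeks agree with Theorem~\ref{nec-cond-1} when $n = m^2$, $m = q$.

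I expect the main obstacle to be only the bookkeeping for the pairing of slopes in $F_q$: making sure that $\{r,-r\}$ really gives disjoint pairs, which uses odd characteristic, and that lines with distinct slopes meet in exactly one point, which uses that $F_q$ is a field. Everything else carries over verbatim from Theorem~\ref{primecurd}.
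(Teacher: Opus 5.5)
Your proposal is correct and follows the paper's route: the paper's proof also pairs off the $q+1$ parallel classes of $\mathcal{D}_q$ and applies Lemma~\ref{double-partition} to each pair exactly as in Theorem~\ref{primecurd}. You additionally make explicit two points the paper leaves implicit, namely the pairing $\{0,\infty\}$, $\{r,-r\}$ (valid since the characteristic is odd) and the fact that lines of distinct slopes meet in exactly one point. One small slip in an aside: there are only $q-1$ nonzero slopes, so your ``any perfect matching'' remark should refer to the $q$ finite slopes together with $\infty$.
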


\begin{proof}
We begin with the CURD $\mathcal{D}_q$. As in the proof of Theorem~\ref{primecurd}, pair off the parallel classes into $\frac{q+1}{2}$ pairs, and replace each pair with a $(q, q^1 2^{\frac{q^2-q}{2}})$-partition.  The design $\mathcal{D}'_q$, whose varieties are $V_q$ and whose blocks are the union of the $\frac{q+1}{2}$ $(q, q^1 2^{\frac{q^2-q}{2}})$-partitions just constructed, is shown to be a CURD exactly as in the proof of Theorem~\ref{primecurd}.
\end{proof}

In the final result of this paper, we construct a CURD with $\lambda = 2$ that satisfies the additional requirement that each pair of varieties appears together in a block of each size. 

\begin{Cor}\label{primepowerCor}
    If $p$ is an odd prime,  $f$ is a positive integer, and  $q=p^f$ then there exists a $q^12^{\frac{q^2-q}{2}}$-CURD in which $\lambda = 2$ and each pair of varieties appears together once in an $m$-block and once in a $2$-block.
\end{Cor}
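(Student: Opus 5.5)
The plan is to take the union of two CURDs on the same variety set $V_q$: the $q^12^{\frac{q^2-q}{2}}$-CURD $\mathcal{D}'_q$ from Theorem~\ref{primepowercurd}, and a second CURD built by the same construction but with the roles of the two partitions in each double $q$-partition exchanged. The design obtained by pairing the classes and applying Lemma~\ref{double-partition} is not symmetric in the two partitions: in $\mathcal{D}'_q$, every pair lying in some $A_i$ appears in a $q$-block, and every pair lying in some $B_j$ appears in a $2$-block. So in the second CURD we apply Lemma~\ref{double-partition} to the same pairs $\mathcal{Y}_d$, but with $\{B_1,\dots,B_q\}$ as the first partition and $\{A_1,\dots,A_q\}$ as the second. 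This is legitimate because Definition~\ref{double-def} is symmetric: $|A_i\cap B_j|=1$ is exactly the condition needed with the roles reversed.

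For each $d$, write $\mathcal{Y}_d$ as the double $q$-partition $(\{A^d_i\},\{B^d_j\})$, where the $A^d_i$ are the lines of one slope and the $B^d_j$ are the lines of the paired slope. Lemma~\ref{double-partition} applied to $(\{A^d_i\},\{B^d_j\})$ gives partitions $\mathcal{P}^d_1,\dots,\mathcal{P}^d_q$. Applied to $(\{B^d_j\},\{A^d_i\})$ it gives partitions $\mathcal{Q}^d_1,\dots,\mathcal{Q}^d_q$. In both cases each partition has one part of size $q$ and $\frac{q^2-q}{2}$ parts of size $2$. The new design $\mathcal{D}''_q$ will have the $q+1$ values of $d$ together with all $\mathcal{P}^d_i$ and $\mathcal{Q}^d_i$ as its parallel classes. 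It is then resolvable and class-uniform with partition $q^12^{\frac{q^2-q}{2}}$.

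It remains to check that each pair $\{u,v\}$ appears exactly once in a $q$-block and exactly once in a $2$-block. By Proposition~\ref{planepowerlemma}, $u$ and $v$ lie on a unique line, and that line belongs to exactly one $\mathcal{Y}_d$. Say it is $A^d_i$; the case $B^d_j$ is symmetric. In the $\mathcal{P}^d$ family the pair is in the $q$-block $A^d_i$. In the $\mathcal{Q}^d$ family the pair lies in the second partition, so by the construction in the proof of Lemma~\ref{double-partition} it is covered by a $2$-block.

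The main obstacle is showing that no pair is covered more than these two times. For that, I would extract from the proof of Lemma~\ref{double-partition} a sharper statement. The $q$-blocks of the $\mathcal{P}^d$ are exactly $A^d_1,\dots,A^d_q$. The $2$-blocks are exactly the edges of the graphs $G_j$ other than those incident to $x$, and these are exactly the pairs inside the $B^d_j$, each occurring once. Hence a pair lying on no line of $\mathcal{Y}_d$ is never covered by $\mathcal{P}^d$ or $\mathcal{Q}^d$. A pair on a line of $\mathcal{Y}_d$ is covered exactly once by each family: once in a $q$-block and once in a $2$-block. Summing over $d$ gives $\lambda=2$ with the required split. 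As a sanity check, the count is consistent: $2\cdot\frac{q+1}{2}\cdot q=q(q+1)$ classes, which is twice the $w$ of Theorem~\ref{nec-cond-1}.
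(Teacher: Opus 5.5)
Your proposal is correct and matches the paper's argument: for each pair of parallel classes, apply Lemma~\ref{double-partition} once as given and once with the roles of the $A$'s and $B$'s reversed, then take all $q(q+1)$ resulting partitions as the parallel classes. Your observation that the lemma's $2$-blocks are exactly the pairs inside the $B_j$, each occurring once, spells out the exact-count check the paper leaves implicit; one slip to fix is that there are $\frac{q+1}{2}$ values of $d$, not $q+1$, as your own final count $2\cdot\frac{q+1}{2}\cdot q$ already reflects.
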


\begin{proof}
    In proving Theorems~\ref{primecurd} and \ref{primepowercurd}, we rely on Lemma~\ref{double-partition} which converts a double $q$-partition into a $(q,q^12^{\frac{q^2-q}{2}})$-partition. In the new partition, varieties in any set $A_i$ remain in a block of size $q$ while varieties in any set $B_j$ end up in a block of size $2$.  Now for each pair of parallel classes in the double $q$-partition,  apply Lemma~\ref{double-partition}  a second time, reversing the roles of the $A$'s and $B$'s.  This constructs the desired CURD.
\end{proof}

\section{Conclusion}
In this paper we investigate CURDS in which each parallel class consists of one block of size $m$ (for $m \ge 3$) and the remaining blocks of size $2$.  Our study was motivated by the question of dividing a class of students into groups each week where there is a  group of size $m$ working with the teacher and the remaining students work in pairs.  For two students, the experience of working together as a pair is qualitatively different from being together in the size $m$ group, and yet the CURD formulation treats them as equivalent.   In Corollary~\ref{primepowerCor}, we build a CURD with the additional feature that each pair of students be together once in a group of size $m$ and once as a pair.  More generally, it would be interesting to determine which CURDS can be modified so that each pair of varieties appears together in a block of each size.

\medskip

\noindent
{\bf Acknowledgement:}  The authors are grateful to Douglas Stinson for making us aware of the work of Danziger and Stevens.

\end{document}